\DeclareMathOperator{\diag}{diag}
\newtheorem{theorem}{Theorem}
\newtheorem{proposition}[theorem]{Proposition}
\newtheorem{lemma}[theorem]{Lemma}
\newtheorem{property}[theorem]{Property}
\newcommand{\mendth}{\hfill \ensuremath{\vartriangle}}
\DeclareMathOperator*{\He}{Sym}
\DeclareMathOperator{\eps}{\varepsilon}
\DeclareMathOperator{\E}{\text{\textbf{E}}}
\title{Computer control of gene expression: Robust setpoint tracking of protein mean and variance using integral feedback}
\author{Corentin Briat and Mustafa Khammash
\thanks{Corentin Briat and Mustafa Khammash are with the Department of Biosystems Science and Engineering {(D-BSSE)}, Swiss Federal Institute of Technology--Z\"{u}rich {(ETH--Z)}, Mattenstrasse 26, 4058 Basel, Switzerland; email: {\tt  \{corentin.briat,mustafa.khammash\}@bsse.ethz.ch}; url: \protect\url{http://www.bsse.ethz.ch/ctsb}, \protect\url{http://www.briat.info}, \protect\url{http://www.bsse.ethz.ch/research/Professors/khammash\textunderscore cv}}}
\begin{document}
\sloppy
\maketitle

\begin{abstract}
Protein mean and variance levels in a simple stochastic gene expression circuit are controlled using proportional integral feedback. It is shown that the protein mean level can be globally and robustly tracked to any desired value using a simple PI controller that satisfies explicit sufficient conditions. Controlling both the mean and variance on the other hand requires the use of an additional control input, chosen here as the mRNA degradation rate. Local robust tracking of mean and variance is proved to be achievable using multivariable PI control, provided that the reference point satisfies necessary conditions imposed by the system. Even more importantly, it is shown that there exist PI controllers that locally, robustly and simultaneously stabilize all the equilibrium points inside the admissible region. Simulation examples illustrate the results.
\end{abstract}

\begin{keywords}
Gene expression network; moment control; PI control; absolute stability.
\end{keywords}

\section{Introduction}

Synthetic biology is an emergent field of biology/biotechnology in which living cells are genetically modified in order to achieve new functions. Biological circuits of interacting genes and proteins have been successfully introduced into living cells to implement various functioning modules such as oscillators \cite{Elowitz:05,Hasty:08}, switches \cite{Collins:00}, logic gates \cite{Wang:11} among others. Inspired by electronic circuit design, synthetic biologists aim to design functioning modules that can be put together in various configurations to build biological devices with designed function.  In spite of many successes, circuits with more than a few interacting genes can rarely built reliably and predictably. For this reason, when it comes to regulation, building effective biosynthetic control circuits remains a big challenge.

Recently it has been shown that more complex regulation becomes possible by moving control functions outside the cells and relegating these to a digital computer \cite{Khammash:11}. This `in-silico' regulation of living cells also allows the implementation of controllers with more accuracy and higher speeds than what is possible within the environment of the cell.  Using synthetic biology methods, yeast cells were genetically engineered so that their gene expression was responsive to light signals. At the same time, flow cytometry and microscopy methods were used to measure and quantify the resulting protein expression levels in real time. These measurements were then fed to a computer control system that used a Kalman filter/MPC control configuration to generate a control signal consisting of a train of light pulses that drove gene expression. Using this setup, feedback control of gene expression in living cells was successfully demonstrated experimentally \cite{Khammash:11}.

Preliminary results on moment control of reaction networks using PI control laws have been obtained in \cite{Klavins:10} where it is shown that such controllers can achieve the desired objectives for certain simple reaction networks. The problem addressed in this paper is slightly different and directly inspired from the relevant gene expression network considered in \cite{Khammash:11}. In the current work, we use a stochastic model of gene expression to explore the feasibility of using simple controllers and continuous (non-pulsed) control inputs to achieve effective genetic control of protein mean and variance. The problems addressed in the paper are beyond those of \cite{Klavins:10}  since existence and characterization of positive PI controllers, accounting for the presence of nonlinearities, are discussed. It is shown that local regulation of protein mean levels to any desired value is achievable by acting on the DNA transcription rate using proportional-integral feedback. The PI controller is also shown to be locally robust and exact regions in the controller parameter space that maintain this local robustness are derived. The positivity requirement for control, ignored in \cite{Klavins:10}, introduces a static nonlinearity in the feedback system and must be considered to rigorously characterize global stability of the controlled system. Using absolute stability theory \cite{Lure:44,Liberzon:02} and the Popov criterion \cite{Popov:61}, it is proved that global asymptotic stability is achievable when the controller gains satisfy very mild sufficient conditions.

Next, we show that using a second control input that controls mRNA degradation, a multivariable PI feedback controller can be designed so that any desired protein mean and variance setpoints that lie within a certain admissibility region defined by the system can be locally and robustly tracked. More importantly, it is also shown that there exist common multivariable PI controllers that locally and robustly stabilizes all the equilibrium points in the admissible region. Numerical simulations finally demonstrate the effectiveness of the designed genetic control systems.

\textit{Outline:} In Section \ref{sec:prob}, the general framework and the main problem, i.e. the control of the moments related to the master equation, of the paper are introduced. Sections \ref{sec:mean} and \ref{sec:var} respectively address the problems of controlling the mean and the variance of the number of proteins in a simple gene expression network. Examples are treated in Section \ref{sec:ex}.

\textit{Notations:} The notation is standard. Given a random variable $X$, its expectation is denoted by $\E[X]$. For a square matrix $M$, $\He[M]$ stands for the sum $M+M^T$. Given a vector $v\in\mathbb{R}^n$, the notation $\diag(v)$ stands for a diagonal matrix having the elements of $v$ as diagonal entries.

\section{Problem Statement}\label{sec:prob}

\subsection{General framework}

Let us start with the general stochastic formulation where $N$ molecular species $S_1,\ldots,S_N$ interact with each others through $M$ reaction channels $R_1,\ldots,R_M$. Assuming homogeneous mixing and thermal equilibrium, the time evolution of the random variables $X_1(t),\ldots,X_N(t)$ associated with the population of each species can be described by the so-called Chemical Master Equation (CME), or Forward Kolmogorov equation, given by
\begin{equation*}
  \dot{P}(\varkappa,t)=\sum_{k=1}^M\left[w_k(\varkappa-s_k)P(\varkappa-s_k,t)-w_k(\varkappa)P(\varkappa,t)\right]
\end{equation*}
where $s_k$ is the stoichiometry vector associated with reaction $R_k$ and $w_k$ the propensity function capturing the rate of the reaction $R_k$. The variable $\varkappa$ is the state-variable and $P(\varkappa,t)$ denotes the probability to be in state $\varkappa$ at time $t$.

Based on the CME, dynamical expressions for the first- and second-order moments may be easily derived and are given by
\begin{equation}\label{eq:moments}
\hspace{-3mm}\begin{array}{lcl}
    \dfrac{d \E[X]}{dt}&=&S\E[w(X)],\\
    \dfrac{d \E[XX^T]}{dt}&=&S\E[w(X)X^T]+\E[w(X)X^T]^TS^T\\
    &&+S\diag\{\E[w(X)]\}S^T
\end{array}
\end{equation}
where $S:=\begin{bmatrix}
  s_1 & \ldots & s_M
\end{bmatrix}\in\mathbb{R}^{N\times M}$ is the stoichiometry matrix and $w(X):=\begin{bmatrix}
  w_1^T & \ldots & w_M^T
\end{bmatrix}^T\in\mathbb{R}^{M}$ the propensity vector.

According to the structure of the propensity functions $w(X)$, the above set of equations may suffer from well-posedness and closedness problems. This is explained by the fact that computing a moment of a certain order may require the knowledge of higher order moments. It is therefore not possible, in this case, to describe the evolution of the the first moments by a finite set of ordinary differential equations.  Some approximation schemes have been proposed in the literature to overcome this problem, see e.g. \cite{Singh:07,Gillepsie:09,Milner:11}, where it is proposed to approximate higher order moments as functions of lower order ones.

\subsection{Affine propensity case and gene expression}

In the affine propensity case, i.e. $w(X)=WX+w_0$, $W\in\mathbb{R}^{M\times N}$, $w_0\in\mathbb{R}^M$, things turn out to be much more convenient. In this case, the moments equations can be reformulated as
\begin{equation}\label{eq:moments}
\begin{array}{rcl}
    \dfrac{d \E[X]}{dt}&=&SW\E[X]+Sw_0,\\
    \dfrac{d\Sigma}{dt}&=&\He[SW\Sigma]+S\diag(W\E[X]+w_0)S^T
\end{array}
\end{equation}
where $\Sigma:=\E[(X-\E[X])(X-\E[X])^T]$ is the covariance matrix. It is immediate to see that when the matrix $SW$ is Hurwitz, the mean and variance trajectories exponentially converge to the equilibrium points $\bar{X}$ and $\bar{\Sigma}$ given by
\begin{equation}
  \hspace{-3mm}\begin{array}{lcl}
   \bar{X}&=& -(SW)^{-1}Sw_0\\
   \bar{\Sigma}&=&\int_0^\infty e^{W^TS^Ts}S\diag(W\bar{X}+w_0)S^Te^{SWs}ds.
  \end{array}
\end{equation}

\begin{figure}[H]
\begin{minipage}[c]{0.52\linewidth}
\centering
 \includegraphics[width=\textwidth]{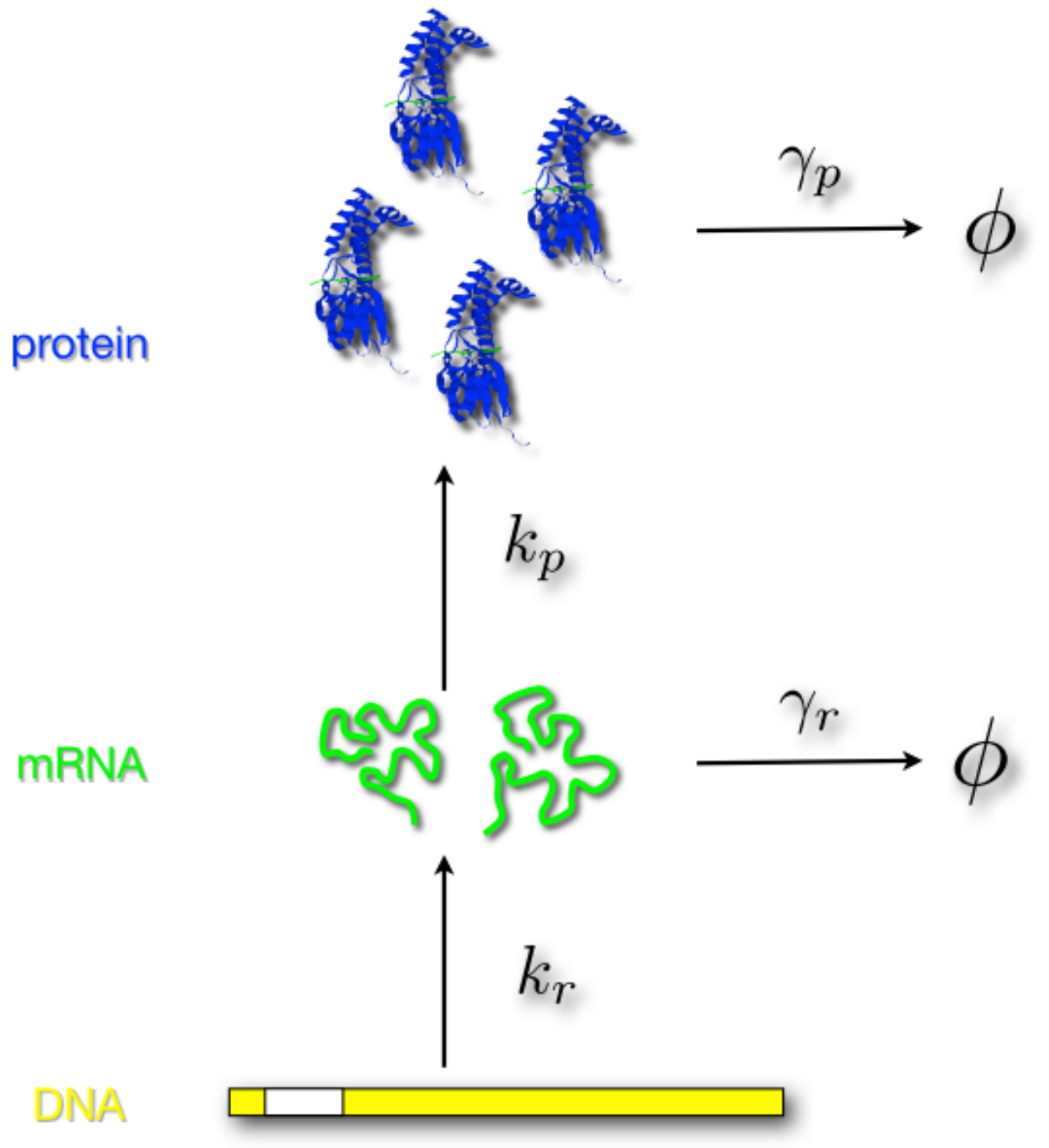}
\end{minipage}
\hfill
\begin{minipage}[l]{0.47\linewidth}
\centering
 {\footnotesize $\begin{array}{l}
   R_1:\phi\stackrel{k_r}{\longrightarrow}\text{mRNA}\\
   R_2:\text{mRNA}\stackrel{\gamma_r}{\longrightarrow}\phi\\
   R_3:\text{mRNA}\stackrel{k_p}{\longrightarrow}\text{protein+mRNA}\\
   R_4:\text{protein}\stackrel{\gamma_p}{\longrightarrow}\phi
 \end{array}$}
 \end{minipage}
 \caption{Simple gene expression network.}\label{fig:gene}
\end{figure}

Considering now the simple gene expression circuit depicted in Fig. \ref{fig:gene} which involves two species, i.e. mRNA ($S_1$) and protein ($S_2$) species, and four reaction channels $R_1,\ldots,R_4$. Let $X_1(t)$ and $X_2(t)$ be random variables describing the number of mRNA and protein molecules respectively. The stoichiometry matrix associated with the gene expression network is given by
\begin{equation}
    S=\begin{bmatrix}
      1 & -1 & 0 & 0\\
      0 & 0 & 1 & -1
    \end{bmatrix}
\end{equation}
and the (affine) propensity vector by
\begin{equation}
    w(X)=\begin{bmatrix}
      k_r & \gamma_rX_1 &  k_pX_1 & \gamma_pX_2
    \end{bmatrix}^T.
\end{equation}
In vector form, equations (\ref{eq:moments}) rewrite
\begin{equation}\label{eq:mainsyst}
\left[\begin{array}{c}
    \dot{x}_1(t)\\
    \dot{x}_2(t)\\
    \hline
    \dot{x}_{3}(t)\\
    \dot{x}_{4}(t)\\
    \dot{x}_{5}(t)
  \end{array}\right]=\left[\begin{array}{c|c}
    A_{ee} & 0\\
    \hline
    A_{\sigma\sigma} & A_{\sigma e}
  \end{array}\right]\left[\begin{array}{c}
    x_1(t)\\
    x_2(t)\\
    \hline
     x_{3}(t)\\
    x_{4}(t)\\
    x_{5}(t)
  \end{array}\right]+\left[\begin{array}{c}
    B_e\\
    \hline
    B_\sigma
  \end{array}\right]k_r
\end{equation}
where the state variables are defined by
\begin{equation*}
  \begin{bmatrix}
    x_1\\
    x_2
  \end{bmatrix}:=\E[X]\ \text{and}\ \begin{bmatrix}
    x_3 & x_4\\
    x_4 & x_5
  \end{bmatrix}:=\Sigma
\end{equation*}
and the system matrices by
\begin{equation}\label{eq:mainsystm}
\begin{array}{lcl}
   A_{ee}&=&\begin{bmatrix}
    -\gamma_r & 0\\
    k_p & -\gamma_p
  \end{bmatrix},\ A_{\sigma e}=\begin{bmatrix}
    \gamma_r & 0\\
    0 & 0\\
    k_p & \gamma_p
  \end{bmatrix},\ B_e=\begin{bmatrix}
    1\\0
  \end{bmatrix},\\
  A_{\sigma\sigma}&=&\begin{bmatrix}
    -2\gamma_r & 0 & 0\\
    k_p & -(\gamma_r+\gamma_p) & 0\\
    0 & 2k_p & -2\gamma_p
  \end{bmatrix},\ B_\sigma=\begin{bmatrix}
    1\\
    0\\
    0
  \end{bmatrix},
\end{array}
\end{equation}
where $k_r>0$ is the transcription rate of DNA into mRNA, $\gamma_r>0$ is the degradation rate of mRNA, $k_p>0$ is the translation rate of mRNA into protein and $\gamma_p>0$ is the degradation rate of the protein.
%
%
\begin{property}
System (\ref{eq:mainsyst})-(\ref{eq:mainsystm}) is asymptotically stable.
\end{property}

\begin{property}
  The equilibrium point of system (\ref{eq:mainsyst})-(\ref{eq:mainsystm}) is given by
\begin{equation}
\begin{array}{lcl}
         x_1^*&=&\dfrac{k_r}{\gamma_r},\quad x_2^*=\dfrac{k_pk_r}{\gamma_p\gamma_r},\ \quad  x_3^*= \dfrac{k_r}{\gamma_r},\\
         x_4^*&=&\dfrac{k_pk_r}{\gamma_r(\gamma_p +\gamma_r)},\
         x_5^*=\dfrac{k_pk_r(\gamma_p + k_p+\gamma_r)}{\gamma_p\gamma_r(\gamma_p + \gamma_r)}.
\end{array}
\end{equation}
\end{property}

\section{Mean control}\label{sec:mean}

The objective of the current section is to give a clear picture of the mean control of the number of proteins using a simple \emph{positive PI controller}, i.e. a PI controller generating nonnegative control inputs. The considered control input is the transcription rate $k_r$ which has been shown to be possibly externally actuated using, for instance, light-induced transcription \cite{Khammash:11}. It is shown in this section that a positive PI control law allows to achieve global and robust output tracking of the mean number of proteins.

\subsection{Preliminaries}

The considered system consists of the interconnection of the following restriction of system (\ref{eq:mainsyst}):
\begin{equation}\label{eq:musyst}
  \begin{bmatrix}
    \dot{x}_1(t)\\
    \dot{x}_2(t)
  \end{bmatrix}=A_{ee}\begin{bmatrix}
    x_1(t)\\
    x_2(t)
  \end{bmatrix}+B_eu(t)
\end{equation}
and the positive PI control law
\begin{equation}\label{eq:pi1}
  u(t)=\varphi\left(k_1(\mu_*-x_2(t))+k_2\int_0^t[\mu_*-x_2(s)]ds\right)
\end{equation}
where $\mu_*$ is the mean number of protein to track and the scalars $k_1,k_2$ the gains of the controller. The on-off nonlinearity $\varphi(u):=\max\{0,u\}$, see e.g.  \cite{Goncalves:07}, is considered in order to impose the positivity of the control input, i.e. the positivity of $k_r\equiv u$.

\begin{property}
  Given a constant reference $\mu_*\ge0$, the equilibrium point of the system (\ref{eq:musyst})-(\ref{eq:pi1}) is given by $x_2^*=\mu_*$ and
  \begin{equation}\label{eq:mupt}
     x_1^*=\dfrac{\mu_*\gamma_p}{k_p},\ u^*=\dfrac{\mu_*\gamma_p\gamma_r}{k_p},\ I^*=\frac{u^*}{k_2}
  \end{equation}
  where $I^*$ is the equilibrium value of the integral term.
\end{property}
As expected, the presence of the integrator allows to rule out any steady state error in the constant reference case.

\subsection{Local stabilizability, stabilization and output tracking}

Since the equilibrium control input $u^*$ and the reference $\mu_*$ are simultaneously positive, the nonlinearity $\varphi(\cdot)$ is not active in a sufficiently small neighborhood of the equilibrium point (\ref{eq:mupt}). The on-off nonlinearity can hence be locally ignored and the local analysis performed on the corresponding linear system. Assuming first that the system parameters $k_p,\gamma_p,\gamma_r$ are exactly known, the following result on local nominal stabilizability and stabilization can be obtained:
\begin{lemma}
  Given system parameters $k_p,\gamma_p,\gamma_r>0$, the system (\ref{eq:musyst}) is locally stabilizable using the control law (\ref{eq:pi1}).  Moreover, the equilibrium point (\ref{eq:mupt}) of the closed-loop system (\ref{eq:musyst})-(\ref{eq:pi1}) is locally asymptotically (exponentially) stable if and only if the conditions
  \begin{equation}\label{eq:nomcond}
   \begin{array}{rclcrcl}
     k_1&>&\dfrac{k_2}{\gamma_p+\gamma_r}-\dfrac{\gamma_p\gamma_r}{k_p}&\text{and}& k_2&>&0
   \end{array}
  \end{equation}
  hold.\mendth
\end{lemma}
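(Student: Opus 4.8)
\textit{Proof plan.} The plan is to reduce the statement to a linear stability test and then apply the Routh--Hurwitz criterion. First I would note, as already observed in the excerpt, that the equilibrium input $u^*$ of (\ref{eq:mupt}) and the reference $\mu_*$ are both strictly positive, so the saturation $\varphi(\cdot)$ acts as the identity on a neighbourhood of the equilibrium point; hence local stabilizability and local asymptotic (exponential) stability of (\ref{eq:musyst})--(\ref{eq:pi1}) are equivalent to the corresponding properties of the linear system obtained by replacing $\varphi(v)$ by $v$. Introducing the integrator state $x_I(t):=\int_0^t[\mu_*-x_2(s)]\,ds$, so that $\dot x_I=\mu_*-x_2$ and $u=k_1(\mu_*-x_2)+k_2x_I$, I would write the closed-loop linearization in deviation coordinates, whose state matrix is
\begin{equation*}
  A_{cl}=\begin{bmatrix}
    -\gamma_r & -k_1 & k_2\\[1mm]
    k_p & -\gamma_p & 0\\[1mm]
    0 & -1 & 0
  \end{bmatrix},
\end{equation*}
equivalently the unity-feedback interconnection of the plant $G(s)=k_p/[(s+\gamma_r)(s+\gamma_p)]$ (from $u$ to $x_2$) with the PI compensator $C(s)=k_1+k_2/s$.

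For \emph{stabilizability}, I would observe that $(A_{ee},B_e)$ is controllable, since $\det[\,B_e\ \ A_{ee}B_e\,]=k_p\neq0$, and that $G(s)$ has no zero at $s=0$ (its numerator is the nonzero constant $k_p$), so appending the integrator $1/s$ does not introduce an uncontrollable/unstabilizable mode at the origin. Rather than invoke a general pole-placement statement, the cleanest route is simply to exhibit stabilizing gains, which will come for free from the second part once the region (\ref{eq:nomcond}) is shown to be nonempty (e.g. $k_2=1$ and $k_1$ sufficiently large).

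For the \emph{equivalence}, I would compute the characteristic polynomial by expanding along the last row of $sI-A_{cl}$ (or, equivalently, forming $s(s+\gamma_r)(s+\gamma_p)+k_p(k_1s+k_2)$), obtaining
\begin{equation*}
  p(s)=s^3+(\gamma_r+\gamma_p)\,s^2+(\gamma_r\gamma_p+k_1k_p)\,s+k_2k_p .
\end{equation*}
By the Routh--Hurwitz criterion for cubics, $p$ is Hurwitz if and only if $\gamma_r+\gamma_p>0$, $k_2k_p>0$, and $(\gamma_r+\gamma_p)(\gamma_r\gamma_p+k_1k_p)>k_2k_p$. The first inequality is automatic; the second reduces to $k_2>0$ since $k_p>0$; and dividing the third by $k_p(\gamma_r+\gamma_p)>0$ gives exactly $k_1>\tfrac{k_2}{\gamma_p+\gamma_r}-\tfrac{\gamma_p\gamma_r}{k_p}$. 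These are precisely conditions (\ref{eq:nomcond}), and (since $u^*,\mu_*>0$) local exponential stability of the full nonlinear closed loop follows whenever they hold, while violation of any of them makes $A_{cl}$ non-Hurwitz, hence the equilibrium unstable.

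\textit{Main obstacle.} There is no genuine difficulty here: the only conceptual point is the (already noted) local deactivation of the saturation, after which the argument is a $3\times3$ determinant together with the cubic Routh--Hurwitz test. The one spot deserving minor care is checking that integral action is structurally admissible, i.e. that $G(s)$ carries no zero at the origin, so that the loop is not rendered unstabilizable by the added integrator.
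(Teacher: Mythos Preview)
Your proposal is correct and follows essentially the same approach as the paper: form the augmented closed-loop matrix $A_{cl}$ (identical to the paper's matrix in (\ref{eq:aug})), then apply the Routh--Hurwitz criterion to its characteristic polynomial and observe that the resulting region of gains is nonempty. You merely spell out the cubic and the Routh--Hurwitz inequalities explicitly, whereas the paper invokes the criterion in one line; the transfer-function/controllability remarks you add are correct but not needed for the argument.
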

\begin{proof}
  The local augmented system gathering the linear dynamics of the system (\ref{eq:musyst}) and the controller (\ref{eq:pi1}) is given by
  \begin{equation}\label{eq:aug}
\begin{bmatrix}
  \dot{x}_1(t)\\
  \dot{x}_2(t)\\
  \dot{I}(t)
\end{bmatrix}=\begin{bmatrix}
    -\gamma_r & -k_1 & k_2\\
    k_p & -\gamma_p & 0\\
    0 & -1 & 0
  \end{bmatrix}\begin{bmatrix}
  x_1(t)\\
  x_2(t)\\
  I(t)
\end{bmatrix}+\begin{bmatrix}
  k_1\\0\\1
\end{bmatrix}\mu_*
  \end{equation}
where $I$ is the integrator state of the controller. Local stabilizability is then equivalent to the existence of a pair $(k_1,k_2)\in\mathbb{R}^2$ such that the state matrix of the augmented system (\ref{eq:aug}) is Hurwitz, i.e. has poles in the open left-half plane. The Routh-Hurwitz criterion yields the conditions (\ref{eq:nomcond}) that define a nonempty subset of the plane $(k_1,k_2)$. System (\ref{eq:musyst}) is hence locally stabilizable using the PI control law (\ref{eq:pi1}) for any triplet of parameter values ${(k_p,\gamma_r,\gamma_p)\in\mathbb{R}_{>0}^3}$. As a consequence, the closed-loop system is locally asymptotically stable when the control parameters are located inside the stability region defined by the conditions (\ref{eq:nomcond}).
\end{proof}

In order to extend the above result to the uncertain case, we assume here that the system parameters $(k_p,\gamma_p,\gamma_r)$ belong to the set
\begin{equation}\label{eq:paramset}
  \mathcal{P}_\mu:=(0,k_p^+]\times[\gamma_p^-,\infty)\times [\gamma_r^-,\infty)
\end{equation}
where the parameter bounds $k_p^+,\gamma_r^-$ and $\gamma_p^-$ are real positive numbers. We then obtain the following result:
\begin{lemma}\label{lem:robloc}
  The system (\ref{eq:musyst}) with uncertain constant parameters $(k_p,\gamma_p,\gamma_r)\in\mathcal{P}_\mu$ is robustly locally asymptotically (exponentially) stabilizable using the control law (\ref{eq:pi1}). Moreover, the equilibrium point (\ref{eq:mupt}) of the closed-loop system (\ref{eq:musyst})-(\ref{eq:pi1}) is locally robustly asymptotically (exponentially) stable if and only if the conditions
  \begin{equation}\label{eq:robcond}
   \begin{array}{rclcrcl}
     k_1&>&\dfrac{k_2}{\gamma_p^-+\gamma_r^-}-\dfrac{\gamma_r^-\gamma_p^-}{k_p^+}&\text{and}&     k_2&>&0
   \end{array}
  \end{equation}
  hold.\mendth
\end{lemma}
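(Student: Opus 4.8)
The plan is to reduce the robust statement to a pointwise application of the previous lemma followed by a worst-case optimization over the parameter box $\mathcal{P}_\mu$. Since the equilibrium control input $u^*$ in (\ref{eq:mupt}) is strictly positive for every admissible parameter triple, the on-off nonlinearity $\varphi(\cdot)$ stays inactive in a neighborhood of the equilibrium regardless of $(k_p,\gamma_p,\gamma_r)\in\mathcal{P}_\mu$, so the local analysis is again carried out on the linear augmented system (\ref{eq:aug}), now with parameter-dependent entries. By the previous lemma, for each fixed triple $(k_p,\gamma_p,\gamma_r)$ the equilibrium is locally asymptotically (exponentially) stable if and only if $k_2>0$ and $k_1>g(k_p,\gamma_p,\gamma_r)$, where $g(k_p,\gamma_p,\gamma_r):=\frac{k_2}{\gamma_p+\gamma_r}-\frac{\gamma_p\gamma_r}{k_p}$ is the right-hand side of the first inequality in (\ref{eq:nomcond}). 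Robust local asymptotic stability is, by definition, the requirement that these conditions hold simultaneously for all $(k_p,\gamma_p,\gamma_r)\in\mathcal{P}_\mu$; since $k_2>0$ is parameter-free, everything reduces to deciding whether $k_1>\sup_{\mathcal{P}_\mu}g$.

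The second step evaluates this supremum. I would compute the partial derivatives of $g$: $\partial g/\partial k_p=\gamma_p\gamma_r/k_p^2>0$, while $\partial g/\partial\gamma_p=-k_2/(\gamma_p+\gamma_r)^2-\gamma_r/k_p<0$ and $\partial g/\partial\gamma_r=-k_2/(\gamma_p+\gamma_r)^2-\gamma_p/k_p<0$, the last two being strictly negative precisely because $k_2>0$. Hence $g$ is increasing in $k_p$ and decreasing in each of $\gamma_p$ and $\gamma_r$, so its supremum over $(0,k_p^+]\times[\gamma_p^-,\infty)\times[\gamma_r^-,\infty)$ is attained at the vertex $(k_p^+,\gamma_p^-,\gamma_r^-)$ and equals $\frac{k_2}{\gamma_p^-+\gamma_r^-}-\frac{\gamma_p^-\gamma_r^-}{k_p^+}$. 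This vertex belongs to $\mathcal{P}_\mu$, so the supremum is in fact a maximum, and therefore $k_1>\sup_{\mathcal{P}_\mu}g$ is exactly the first inequality in (\ref{eq:robcond}). This settles the ``if'' direction; for ``only if'', if (\ref{eq:robcond}) fails then either $k_2\le 0$ or the nominal condition (\ref{eq:nomcond}) is violated at the vertex parameter value, and the previous lemma then yields instability at that value, contradicting robust stability.

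For the robust stabilizability claim it suffices to exhibit one admissible controller: fixing any $k_2>0$ and taking $k_1>\frac{k_2}{\gamma_p^-+\gamma_r^-}$ makes (\ref{eq:robcond}) hold, so the region it defines is nonempty and the closed loop is robustly locally asymptotically (exponentially) stable for that choice.

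I do not expect a genuine obstacle. The only point requiring care is confirming that the three monotonicities pin the worst case down to a single vertex of the unbounded box $\mathcal{P}_\mu$ — in particular that the two competing terms of $g$, namely $k_2/(\gamma_p+\gamma_r)$ and $-\gamma_p\gamma_r/k_p$, do not trade off against one another — which the sign computation above settles directly; and checking that the boundary points $k_p=k_p^+$, $\gamma_p=\gamma_p^-$, $\gamma_r=\gamma_r^-$ indeed lie in $\mathcal{P}_\mu$, so that the characterization emerges as the strict inequality (\ref{eq:robcond}) rather than a non-strict one.
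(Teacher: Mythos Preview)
Your proposal is correct and follows essentially the same approach as the paper: define the right-hand side of (\ref{eq:nomcond}) as a function of the parameters, establish the three monotonicities, and conclude that the supremum over $\mathcal{P}_\mu$ is attained at the vertex $(k_p^+,\gamma_p^-,\gamma_r^-)$. Your write-up is in fact more complete than the paper's, which only sketches the monotonicity argument and leaves the ``only if'' direction and the nonemptiness of the stabilizing region implicit.
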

\begin{proof}
   Define the lower bound function for $k_1$ by $f(x,y,z):=\dfrac{k_2}{y+z}-\dfrac{yz}{x}$, $x,y,z,k_2>0$ and let ${\bar{f}:=\sup_{(x,y,z)\in\mathcal{P}_\mu}f(x,y,z)}$. Simple calculations show that $f(x,y,z)$ is increasing in $x$ and decreasing in $y,z$ over $(x,y,z)\in\mathcal{P}_\mu$. Hence, we have $\bar{f}=f(k_p^+,\gamma_p^-,\gamma_r^-)$ and $k_1>\bar{f}$ implies that $k_1>f(x,y,z)$ for all $(x,y,z)\in\mathcal{P}_\mu$. This concludes the proof.
\end{proof}

\subsection{Global stabilizability, stabilization and output tracking}

Local properties obtained in the previous section are generalized here to global ones.  Noting first that the nonlinear function $\varphi(\cdot)$ is time-invariant and belongs to the sector $[0,1]$, i.e. $0\le\varphi(x)/x\le 1$, $x\in\mathbb{R}$, stability can then be analyzed using absolute stability theory \cite{Khalil:02} and an extension of the Popov criterion \cite{Popov:61,Khalil:02} for marginally stable systems \cite{Jonsson:97,Fliegner:06}. We have the following result:
%
\begin{theorem}
   Given system parameters $k_p,\gamma_p,\gamma_r>0$, then the equilibrium point  (\ref{eq:mupt}) of the closed-loop system (\ref{eq:musyst})-(\ref{eq:pi1}) is globally asymptotically stable if the following conditions
   \begin{equation}\label{eq:djsqdj}
   \begin{array}{rclcrcl}
     k_2&>&0&\text{and}& k_1&>&\dfrac{k_2}{\gamma_p}
   \end{array}
   \end{equation}
   hold.\mendth
\end{theorem}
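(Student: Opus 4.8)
The plan is to recast the closed-loop system (\ref{eq:musyst})--(\ref{eq:pi1}) as a Lurie-type feedback interconnection between a linear time-invariant block and the time-invariant sector nonlinearity $\varphi$, and then to conclude through an absolute-stability argument based on the Popov criterion \cite{Popov:61} in its extension to marginally stable systems \cite{Jonsson:97,Fliegner:06} (the system is marginally stable here because of the controller's integrator). First I would move the equilibrium (\ref{eq:mupt}) to the origin by setting $\tilde x_1=x_1-x_1^*$, $\tilde x_2=x_2-x_2^*$, $\tilde I=I-I^*$. Since $u^*\ge0$, the shifted nonlinearity $\psi(\sigma):=\varphi(\sigma+u^*)-u^*$ is single-valued, time-invariant, and still belongs to the sector $[0,1]$, that is $\psi(\sigma)\sigma\ge0$, $\psi(\sigma)^2\le\psi(\sigma)\sigma$ and $\int_0^\sigma\psi(\tau)\,d\tau\ge0$ for all $\sigma$. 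The error dynamics then take the form $\dot z=Az+B\,\psi(-\hat Cz)$, $z=(\tilde x_1,\tilde x_2,\tilde I)$, with $\hat C=\begin{bmatrix}0&k_1&-k_2\end{bmatrix}$ and
\[
A=\begin{bmatrix}-\gamma_r&0&0\\ k_p&-\gamma_p&0\\ 0&-1&0\end{bmatrix},\qquad
B=\begin{bmatrix}1\\0\\0\end{bmatrix}.
\]
Consequently the loop transfer function seen by the nonlinearity is
\[
G(s)=\hat C(sI-A)^{-1}B=\frac{k_p\,(k_1 s+k_2)}{s(s+\gamma_r)(s+\gamma_p)},
\]
whose poles are $-\gamma_r$, $-\gamma_p$ (in the open left half-plane) and a simple pole at the origin with residue $k_pk_2/(\gamma_r\gamma_p)$, strictly positive precisely because $k_2>0$.

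Second, I would check the hypotheses of the extended Popov criterion and verify its frequency-domain inequality. For minimality, $(A,B)$ is controllable for all positive parameters, and $(A,\hat C)$ is observable except in the single degenerate case $k_2=k_1\gamma_r$, where the cancelled mode is the stable pole $-\gamma_r$ and may be discarded without affecting global asymptotic stability; the only imaginary-axis pole of $G$ is simple with positive residue, which holds iff $k_2>0$. It then remains to exhibit a Popov multiplier $\eta\ge0$ such that
\[
1+\mathrm{Re}\!\left[(1+j\omega\eta)\,G(j\omega)\right]>0\quad\text{for all }\omega\neq0
\]
(with $1/k=1$ since the sector is $[0,1]$), together with the analogous positivity of the $\omega\to0$ limit. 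This is the heart of the argument: substituting $s=j\omega$ and multiplying through by the positive quantity $|\,j\omega(j\omega+\gamma_r)(j\omega+\gamma_p)\,|^2$ reduces the inequality to the positivity, for every $\omega$, of a polynomial that is quadratic in $\Omega:=\omega^2$ with leading coefficient $1$; its constant term and its coefficient of $\Omega$ are affine and increasing in $\eta$, with slopes $k_pk_2\gamma_r\gamma_p$ and $k_p\big(k_1(\gamma_r+\gamma_p)-k_2\big)$ respectively. Under $k_2>0$ and $k_1>k_2/\gamma_p$ — the latter giving $k_1(\gamma_r+\gamma_p)>k_1\gamma_p>k_2$ — both slopes are strictly positive, so choosing $\eta$ large enough makes both coefficients positive and the quadratic positive on $\Omega\ge0$; the $\omega\to0$ limit equals a positive multiple of $\eta k_2$ plus bounded terms and is likewise positive for such $\eta$. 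This establishes the Popov inequality.

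Granting the criterion, the origin of the Lurie system — equivalently the equilibrium (\ref{eq:mupt}) of (\ref{eq:musyst})--(\ref{eq:pi1}) — is globally asymptotically stable, so the mean protein number converges to $\mu_*$ from every initial condition. The delicate part is expected to be the rigorous invocation of the Popov criterion in this marginally stable setting rather than the frequency-domain algebra: one either cites the statements of \cite{Jonsson:97,Fliegner:06} directly, or constructs the Lurie--Postnikov function $V(z)=z^TPz+2\eta\int_0^{\hat Cz}\psi(\sigma)\,d\sigma$ from a Kalman--Yakubovich--Popov-type lemma adapted to the pole at the origin (where $P$ may only be positive semidefinite) and finishes with LaSalle's invariance principle, using observability to rule out trajectories confined to $\{\dot V=0\}$. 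This is exactly where $k_2>0$ is indispensable — it makes the integrator state detectable and the pole's residue positive — and it also subsumes the observation that the control input cannot stay saturated at $0$ forever (if $u\equiv0$ then $x_1,x_2\to0<\mu_*$, the integrator keeps increasing and the nonlinearity eventually de-saturates).
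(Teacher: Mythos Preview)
Your proposal is correct and follows essentially the same route as the paper: rewrite the closed loop as a Lurie interconnection with the transfer function $G(s)=k_p(k_1s+k_2)/[s(s+\gamma_r)(s+\gamma_p)]$, invoke the Popov criterion in its extension to marginally stable systems (the simple pole at $0$ has positive residue precisely when $k_2>0$), reduce the Popov inequality to positivity on $[0,\infty)$ of a monic quadratic in $\omega^2$, and observe that under $k_1\gamma_p>k_2$ both non-leading coefficients grow affinely in the multiplier and can be made positive. Your treatment is in fact a bit more careful than the paper's (explicit shift of the equilibrium and of the nonlinearity, minimality check, and the LaSalle alternative), but the argument is the same.
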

\begin{proof}
Following the absolute stability paradigm, the closed-loop system (\ref{eq:musyst})-(\ref{eq:pi1}) is rewritten as the interconnection of the marginally stable LTI system
\begin{equation}
  H(s)=\dfrac{k_p(k_1s+k_2)}{s(s+\gamma_r)(s+\gamma_p)}
\end{equation}
and the static nonlinearity $\varphi(\cdot)$. We assume in the following that $k_2>0$, which is a necessary condition for local asymptotic stability of the equilibrium point (\ref{eq:mupt}). Since $\lim_{s\to0}sH(s)=\frac{k_pk_2}{\gamma_r\gamma_p}>0$, then the Popov criterion \cite{Popov:61,Fliegner:06} can be applied. It states that the system (\ref{eq:musyst}) is absolutely stabilizable using controller (\ref{eq:pi1}) if there exist $(k_1,k_2)\in\mathbb{R}^2$ and $q\ge0$ such that the condition
\begin{equation}\label{eq:cost}
F(j\omega,q):=\Re\left[(1+qj\omega)H(j\omega)\right]>-1
\end{equation}
holds for all $\omega\in\mathbb{R}$. In order to check this condition, first rewrite $F(j\omega,q)$ as
\begin{equation}
  F(j\omega,q)=\dfrac{N_0(\omega)}{D(\omega)}+q\dfrac{N_1(\omega)}{D(\omega)}
\end{equation}
where
\begin{equation}
  \begin{array}{lcl}
        N_0(\omega)&=&k_p\left[k_1(\gamma_r\gamma_p-\omega^2)-k_2(\gamma_r+\gamma_p)\right]\\
        N_1(\omega)&=&k_p\left[k_1\omega^2(\gamma_r+\gamma_p)+k_2(\gamma_p\gamma_r-\omega^2)\right]\\
        D(\omega)&=&(\omega^2+\gamma_r^2)(\omega^2+\gamma_p^2).
  \end{array}
\end{equation}
Since $D(\omega)>0$ for all $\omega\in\mathbb{R}$, then the condition (\ref{eq:cost}) is equivalent to
\begin{equation}
  N_0(\omega)+q N_1(\omega)+D(\omega)>0
\end{equation}
for all $\omega\in\mathbb{R}$. Letting $\bar{\omega}:=\omega^2$, we get
\begin{equation}\label{eq:P}
  Z(\bar{\omega}):=\bar{\omega}^2+z_1(q)\bar{\omega}+z_0(q)>0
\end{equation}
for all $\bar{\omega}\in[0,\infty)$ where
\begin{equation*}
  \begin{array}{lcl}
    z_0(q)&=&\gamma_r^2\gamma_p^2+k_p\left[\gamma_r(\gamma_pk_1-k_2+\gamma_pk_2q)-\gamma_pk_2\right]\\
    z_1(q)&=&\gamma_p^2+\gamma_r^2+k_p\left[(k_1\gamma_p-k_2)q-k_1+k_1\gamma_r\right].
  \end{array}
\end{equation*}
The problem therefore essentially becomes a positivity analysis of the polynomial $Z(\bar{\omega})$ over $[0,\infty)$. By virtue of Descartes's rule of signs \cite{Henrici:88}, if there exists $q\ge0$ such that $z_0(q)>0$ and $z_1(q)>0$, then $Z$ does not have positive real zeros, and hence the Popov condition is verified. Since $k_2>0$, it is immediate to see that by choosing a sufficiently large $q\ge0$, the coefficient $z_0(q)$ can be made positive. When additionally $k_1\gamma_p-k_2>0$ holds, then $z_1(q)$ can also be made positive in the same way. Therefore, when these conditions on the controller gains hold, $z_0(q)$ and $z_1(q)$ both admit positive values provided that $q\ge0$ is chosen sufficiently large, proving then that the equilibrium point (\ref{eq:mupt}) is globally stable.

To prove global asymptotic stability, it is enough to note that since $u^*>0$, we have $\varphi(u^*)=u^*$ and the control input equilibrium value does not lie in the kernel of $\varphi$. According to \cite{Fliegner:06}, this allows to conclude on the global asymptotic stability of the equilibrium point (\ref{eq:mupt}). The proof is complete.
\end{proof}

It is immediate to obtain the following extension to the uncertain case:
\begin{lemma}
   Given system parameters $(k_p,\gamma_p,\gamma_r)\in\mathcal{P}_\mu$, then the equilibrium point  (\ref{eq:mupt}) of the closed-loop system (\ref{eq:musyst})-(\ref{eq:pi1}) is globally robustly asymptotically stable if the following conditions
   \begin{equation}\label{eq:djsqdj2}
   \begin{array}{rclcrcl}
     k_2&>&0&\text{and}& k_1&>&\dfrac{k_2}{\gamma_p^-}
   \end{array}
   \end{equation}
   hold.\mendth
\end{lemma}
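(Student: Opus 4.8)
The plan is to reduce the robust statement to the nominal Theorem by a simple monotonicity argument, mirroring the proof of Lemma~\ref{lem:robloc}. The crucial observation is that the sufficient conditions $k_2>0$ and $k_1>k_2/\gamma_p$ of the Theorem do not involve $k_p$ or $\gamma_r$ at all, so uncertainty in those two parameters is immaterial; only the dependence on $\gamma_p$ needs to be dealt with.

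First I would fix an arbitrary triplet $(k_p,\gamma_p,\gamma_r)\in\mathcal{P}_\mu=(0,k_p^+]\times[\gamma_p^-,\infty)\times[\gamma_r^-,\infty)$. Since $k_2>0$ and the map $\gamma_p\mapsto k_2/\gamma_p$ is strictly decreasing on $(0,\infty)$, its supremum over $[\gamma_p^-,\infty)$ is attained at $\gamma_p=\gamma_p^-$ and equals $k_2/\gamma_p^-$. Hence the assumption $k_1>k_2/\gamma_p^-$ gives $k_1>k_2/\gamma_p^-\ge k_2/\gamma_p$, so the pair $(k_1,k_2)$ satisfies condition (\ref{eq:djsqdj}) for this particular triplet.

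Next I would invoke the Theorem for this fixed parameter triplet: the equilibrium point (\ref{eq:mupt}) of the closed-loop system (\ref{eq:musyst})-(\ref{eq:pi1}) is globally asymptotically stable. Since the triplet was arbitrary in $\mathcal{P}_\mu$ and the gains $(k_1,k_2)$ were chosen independently of it, global asymptotic stability holds for \emph{every} admissible parameter value, which is precisely the claimed global robust asymptotic stability. I would also note in passing that $x_2^*=\mu_*$ regardless of the parameters, so the mean-tracking objective is attained robustly; only the internal equilibrium coordinates $x_1^*,u^*,I^*$ in (\ref{eq:mupt}) shift with the parameters, without affecting stability.

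The main — and essentially only — obstacle is bookkeeping: verifying that the worst case over $\mathcal{P}_\mu$ is $\gamma_p=\gamma_p^-$ and that $k_p,\gamma_r$ genuinely drop out of the condition. There is no analytic difficulty here, as all the real work (the Popov / absolute-stability argument for the marginally stable transfer function $H(s)$) was already carried out in the proof of the Theorem; this extension is, as the statement suggests, immediate.
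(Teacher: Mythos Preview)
Your proposal is correct and matches the paper's approach: the paper states this lemma as an ``immediate'' extension of the preceding Theorem and gives no further proof, so your monotonicity argument (the bound $k_2/\gamma_p$ is maximized at $\gamma_p=\gamma_p^-$, while $k_p,\gamma_r$ do not enter the condition) is exactly the intended justification, paralleling the proof of Lemma~\ref{lem:robloc}.
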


It is interesting to note that the above results are also valid for any static nonlinearity in the sector $[0,1]$, including for instance saturations. It can also be easily adapted to consider nonlinear functions inside more general sectors, albeit the resulting conditions may be different.

Note that complementary sufficient conditions for nominal and robust stability could have been extracted from the polynomial $Z$ in (\ref{eq:P}), either by considering the cases  $k_1\gamma_p-k_2=0$ and  $k_1\gamma_p-k_2<0$; or by using Sturm series \cite{Sturm:29} that could provide necessary and sufficient conditions for the polynomial $Z$ to be positive over $\bar{\omega}\in[0,\infty)$. These conditions are however quite intricate and, for the sake of simplicity, have not been retained in the current work. It is indeed quite to difficult to draw interesting conclusions from them, unlike conditions (\ref{eq:djsqdj}) which can interpreted as a restriction of the local stability conditions (\ref{eq:nomcond}).

\subsection{Disturbance rejection}

It seems important to discuss disturbance rejection properties of the closed-loop system. Due to the nonlinear term, the rejection of constant input disturbances is only possible when they remain within certain bounds. This is formalized below:
\begin{lemma}
  Given system parameters $k_p,\gamma_p,\gamma_r>0$, the control law (\ref{eq:pi1}) globally rejects constant input disturbances $\delta_u$ that satisfy
  \begin{equation}\label{eq:conddist1}
    \delta_u\le\dfrac{\gamma_p\gamma_r}{k_p}\mu_*
  \end{equation}
  provided that the controller gains satisfy conditions (\ref{eq:djsqdj}).\mendth
\end{lemma}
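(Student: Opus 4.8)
The plan is to reduce the statement to the global asymptotic stability theorem proved above by means of a constant change of coordinates. I would model the constant input disturbance as entering the actuation channel, so that the first line of (\ref{eq:musyst}) reads $\dot{x}_1(t)=-\gamma_r x_1(t)+u(t)+\delta_u$ with $u(t)$ still given by (\ref{eq:pi1}). By the same reasoning that determines the disturbance-free equilibrium (\ref{eq:mupt}), the integrator forces $x_2^*=\mu_*$ at steady state, whence $x_1^*=\gamma_p\mu_*/k_p$, $u^*=\gamma_p\gamma_r\mu_*/k_p-\delta_u$ and $I^*=u^*/k_2$. For this equilibrium to be consistent with the on-off nonlinearity one needs $u^*=\varphi(k_2 I^*)\ge0$, which is precisely condition (\ref{eq:conddist1}); under it the steady-state output equals $\mu_*$, i.e. the disturbance is rejected asymptotically, and it only remains to establish global attractivity of this equilibrium.

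Next I would pass to deviation variables centered at $(x_1^*,x_2^*,I^*)$. Since $\delta_u$ is constant it cancels from the shifted equations, and one recovers exactly the Lur'e interconnection used in the proof of the global stability theorem: the same marginally stable transfer function $H(s)=\dfrac{k_p(k_1s+k_2)}{s(s+\gamma_r)(s+\gamma_p)}$ in feedback with a time-invariant static nonlinearity, now $\psi(v):=\varphi(v+u^*)-u^*$. It is straightforward to check that $\psi(0)=0$ and that $\psi$ still belongs to the sector $[0,1]$: for $v\ge-u^*$ one has $\psi(v)=v$, while for $v<-u^*\le0$ one has $0\le\psi(v)/v=-u^*/v<1$.

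With this observation the argument of the global stability theorem (the one whose hypotheses are (\ref{eq:djsqdj})) applies essentially verbatim. The LTI block $H$ is unchanged, so $\lim_{s\to0}sH(s)=k_pk_2/(\gamma_r\gamma_p)>0$ thanks to $k_2>0$, the Popov criterion for marginally stable systems \cite{Popov:61,Fliegner:06} is available, and the very same positivity analysis of the polynomial $Z(\bar\omega)$ in (\ref{eq:P})---choosing $q\ge0$ large enough and using $k_1\gamma_p-k_2>0$---shows that the Popov inequality (\ref{eq:cost}) holds under (\ref{eq:djsqdj}). Hence the shifted equilibrium is globally asymptotically stable and $x_2(t)\to\mu_*$ from every initial condition.

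The one delicate point---and the main obstacle---is the boundary case $\delta_u=\gamma_p\gamma_r\mu_*/k_p$, where $u^*=0$ lies in the kernel of $\varphi$ and the integrator equilibrium ceases to be unique; there the step of \cite{Fliegner:06} that upgrades marginal stability to asymptotic stability no longer applies directly, and attractivity of the output has to be recovered separately, e.g. through a LaSalle-type invariance argument. For strict inequality in (\ref{eq:conddist1}) no such issue arises and the result is a direct corollary of the global stability theorem via the constant change of variables, requiring no new computation.
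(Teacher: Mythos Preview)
Your argument is correct and follows essentially the same route as the paper: compute the disturbed equilibrium control input $u_\delta^*=\gamma_p\gamma_r\mu_*/k_p-\delta_u$, observe that consistency with $\varphi$ requires $u_\delta^*\ge0$, and then appeal to the global stability result under (\ref{eq:djsqdj}). The paper's own proof stops after the equilibrium computation and leaves the stability step implicit; you go further and explicitly verify that the shifted nonlinearity $\psi(v)=\varphi(v+u^*)-u^*$ remains in the sector $[0,1]$ so that the Popov argument of the preceding theorem transfers verbatim, and you also flag the boundary case $u^*=0$ where uniqueness of the integrator equilibrium and the asymptotic-stability upgrade from \cite{Fliegner:06} become delicate---a point the paper does not discuss. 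So your proposal is the same idea, carried out with more care.
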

\begin{proof}
  In presence of constant input disturbances, the equilibrium value of the control input is given by
  \begin{equation}
    u_\delta^*:=\dfrac{\gamma_p\gamma_r}{k_p}\mu_*-\delta_u.
  \end{equation}
  This value needs to be nonnegative in order to be driven by the on-off nonlinearity $\varphi$, which is the case if and only if condition (\ref{eq:conddist1}) holds.
\end{proof}

The above result readily extends to the uncertain case:
\begin{lemma}
  Assume $(k_p,\gamma_p,\gamma_r)\in\mathcal{P}_\mu$, the control law (\ref{eq:pi1}) satisfying conditions (\ref{eq:djsqdj2}) globally and robustly rejects constant input disturbances if and only if the condition
  \begin{equation}\label{eq:conddist2}
    \delta_u\le\dfrac{\gamma_p^+\gamma_r^+}{k_p^-}\mu_*
  \end{equation}
  is fulfilled.\mendth
\end{lemma}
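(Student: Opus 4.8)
The plan is to reduce the robust claim to the nominal disturbance-rejection lemma by an extremal-parameter argument, in exactly the same spirit that Lemma~\ref{lem:robloc} reduces robust stabilization to the nominal case. First I would recall from the preceding proof that, in the presence of a constant input disturbance $\delta_u$, the closed loop admits the shifted equilibrium control input
\begin{equation*}
  u_\delta^*(k_p,\gamma_p,\gamma_r)=\dfrac{\gamma_p\gamma_r}{k_p}\mu_*-\delta_u,
\end{equation*}
and that the disturbance is rejected (the integrator drives the steady-state error to zero and $\varphi$ remains inactive at equilibrium) if and only if this value is nonnegative. Hence, for a fixed parameter triple, rejection is equivalent to $\delta_u\le g(k_p,\gamma_p,\gamma_r)\mu_*$ where $g(k_p,\gamma_p,\gamma_r):=\gamma_p\gamma_r/k_p$, which is precisely condition (\ref{eq:conddist1}).

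Next I would convert this parameter-wise condition into a single robust one by analysing the monotonicity of $g$ over the admissible range. A direct computation gives $\partial g/\partial\gamma_p=\gamma_r/k_p>0$, $\partial g/\partial\gamma_r=\gamma_p/k_p>0$ and $\partial g/\partial k_p=-\gamma_p\gamma_r/k_p^2<0$, so $g$ is increasing in $\gamma_p$ and $\gamma_r$ and decreasing in $k_p$. The extremal (binding) configuration over the admissible box is therefore obtained by taking $\gamma_p$ and $\gamma_r$ at their largest and $k_p$ at its smallest admissible value, which yields the extremal rejection tolerance $g(k_p^-,\gamma_p^+,\gamma_r^+)=\gamma_p^+\gamma_r^+/k_p^-$. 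Substituting this extremal value into the parameter-wise rejection condition produces exactly the bound (\ref{eq:conddist2}).

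For the equivalence I would argue both directions from this extremal characterisation. If (\ref{eq:conddist2}) holds, then $\delta_u$ lies within the rejection tolerance attained at the binding configuration, so the shifted equilibrium input $u_\delta^*$ stays admissible and the disturbance is (robustly) rejected; conversely, if (\ref{eq:conddist2}) is violated, then $\delta_u$ exceeds the extremal tolerance $g(\cdot)\mu_*$, the equilibrium input $u_\delta^*$ is forced negative and clipped by $\varphi$, and a nonzero steady-state offset persists, so rejection fails. The underlying tracking remains globally robust under conditions (\ref{eq:djsqdj2}) by the global robust stability lemma, so no separate stability argument is required and the conclusion follows.

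The routine parts are the sign computations for the partials of $g$ and the substitution into (\ref{eq:conddist1}). The main obstacle I anticipate is the bookkeeping at the extremal step: one must verify that the binding vertex of the parameter box is indeed $(k_p^-,\gamma_p^+,\gamma_r^+)$ rather than the opposite corner, keep the quantifier over the uncertain parameters consistent with that choice, and confirm that the resulting inequality is tight enough to yield an \emph{if and only if} rather than a merely sufficient one-sided condition. Reconciling the direction of each extremum with the stated bound (\ref{eq:conddist2}) is the delicate point; once it is settled, the remainder follows by the same reduction used in Lemma~\ref{lem:robloc}.
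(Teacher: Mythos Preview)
Your overall strategy---reduce to the nominal condition (\ref{eq:conddist1}) and then take an extremum over $\mathcal{P}_\mu$---is exactly what the paper does; its proof is just the one-line ``simple extremum argument similar to the one used in the proof of Lemma~\ref{lem:robloc}''. So the route is right.

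The gap is precisely the one you flag but then resolve in the wrong direction. ``Robustly rejects'' means rejection must hold for \emph{every} $(k_p,\gamma_p,\gamma_r)\in\mathcal{P}_\mu$, i.e.
\[
\delta_u\le g(k_p,\gamma_p,\gamma_r)\,\mu_*\quad\text{for all }(k_p,\gamma_p,\gamma_r)\in\mathcal{P}_\mu,
\]
which is equivalent to $\delta_u\le\inf_{\mathcal{P}_\mu} g\cdot\mu_*$, not $\sup_{\mathcal{P}_\mu} g\cdot\mu_*$. Your monotonicity computation is correct: $g$ is increasing in $\gamma_p,\gamma_r$ and decreasing in $k_p$. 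Hence the \emph{infimum} is attained at the corner $(k_p^+,\gamma_p^-,\gamma_r^-)$, giving the robust tolerance $\gamma_p^-\gamma_r^-/k_p^+$. The corner $(k_p^-,\gamma_p^+,\gamma_r^+)$ that you select is the \emph{supremum} of $g$, and your ``if'' direction then fails: $\delta_u\le(\sup g)\mu_*$ does not prevent $u_\delta^*<0$ at less favourable parameter values, so rejection is not robust. Your converse is also mismatched for the same reason.

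Two bookkeeping points reinforce this. First, on $\mathcal{P}_\mu=(0,k_p^+]\times[\gamma_p^-,\infty)\times[\gamma_r^-,\infty)$ the bounds $k_p^-,\gamma_p^+,\gamma_r^+$ are not even defined (indeed $\sup g=+\infty$ there), so the quantity in (\ref{eq:conddist2}) as printed cannot be the output of your extremum; this is a strong hint that the intended bound is $\gamma_p^-\gamma_r^-/k_p^+$. Second, compare with Lemma~\ref{lem:robloc}: there the nominal condition is $k_1>f$, so robustness requires $k_1>\sup f$; here the nominal condition is $\delta_u\le g\mu_*$, so robustness requires $\delta_u\le(\inf g)\mu_*$. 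Once you swap the corner accordingly, the if-and-only-if goes through exactly as you outline.
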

\begin{proof}
  The proof follows from a simple extremum argument similar to the one used in the proof of Lemma \ref{lem:robloc}.
\end{proof}

It seems important to point out that the sets of admissible perturbations defined by (\ref{eq:conddist1}) or (\ref{eq:conddist2}) do not depend on the choice for the controller gains. They do however depend on the mean reference value $\mu_*$, which is expected since small $\mu_*$'s yield small control inputs that are more likely to be overwhelmed by disturbances.

\subsection{Concluding remarks}

The coefficient of variation $C_\nu:=\sigma_*/\mu_*$, defined as the ratio of the equilibrium values for the standard deviation and the mean number of proteins, is given in the current setup by
\begin{equation}
  C_\nu=\dfrac{1}{\sqrt{\mu_*}}\sqrt{1+\dfrac{k_p}{\gamma_p+\gamma_r}}.
\end{equation}
This shows that the equilibrium standard deviation depends on the desired mean value, and thus we have no control over it. Since the variance automatically increases as the mean increases, this motivates the aim of controlling the variance in order to keep it a reasonably low level.

\section{Mean and Variance control}\label{sec:var}

As discussed in the previous section, acting on $k_r$ is not sufficient for controlling both the mean and variance equilibrium values. It is shown in this section that variance control can be achieved by adding the second control input $\gamma_r\equiv u_2$. Fundamental limitations of the control system are discussed first, then local stabilizability is addressed.

\subsection{Fundamental limitations}

Let us consider in this section the control inputs $k_r\equiv u_1$ and $\gamma_r\equiv u_2$. It is shown below that there is a fundamental limitation on the references values for the mean and variance.
\begin{proposition}
  The set of admissible reference values $(\mu_*,\sigma_*^2)$ is given by the open and nonempty set
  \begin{equation}
    \mathcal{A}:=\left\{(x,y)\in\mathbb{R}_{>0}^2:\ x<y< \left(1+\dfrac{k_p}{\gamma_p}\right)x\right\}
  \end{equation}
  where $k_p,\gamma_p>0$. \mendth
\end{proposition}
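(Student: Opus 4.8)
The plan is to work directly with the explicit equilibrium formulas recorded in the second Property of Section~\ref{sec:prob}, identifying $\mu_*$ with $x_2^*$ (the steady-state mean number of proteins) and $\sigma_*^2$ with $x_5^*$ (its steady-state variance), now regarding $k_r$ and $\gamma_r$ as free positive parameters while $k_p,\gamma_p>0$ remain fixed. A point $(\mu_*,\sigma_*^2)$ is admissible precisely when there exist $k_r,\gamma_r>0$ producing it as an equilibrium, so the statement reduces to describing the image of the map $(k_r,\gamma_r)\mapsto(x_2^*,x_5^*)$. The first step is to eliminate $k_r$ by forming the ratio
\begin{equation*}
\frac{\sigma_*^2}{\mu_*}=\frac{x_5^*}{x_2^*}=\frac{\gamma_p+k_p+\gamma_r}{\gamma_p+\gamma_r}=1+\frac{k_p}{\gamma_p+\gamma_r},
\end{equation*}
which carries the only genuine constraint: the map $\gamma_r\mapsto 1+k_p/(\gamma_p+\gamma_r)$ is a strictly decreasing bijection from $(0,\infty)$ onto the open interval $\left(1,\,1+k_p/\gamma_p\right)$, so $\sigma_*^2/\mu_*$ necessarily lies strictly between $1$ and $1+k_p/\gamma_p$. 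Multiplying by $\mu_*>0$ yields $\mu_*<\sigma_*^2<(1+k_p/\gamma_p)\mu_*$, i.e.\ membership in $\mathcal{A}$ is necessary.

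For sufficiency I would invert these relations explicitly. Given any $(\mu_*,\sigma_*^2)\in\mathcal{A}$, the displayed ratio forces
\begin{equation*}
\gamma_r=\frac{k_p\mu_*}{\sigma_*^2-\mu_*}-\gamma_p,\qquad k_r=\frac{\gamma_p\gamma_r}{k_p}\mu_*,
\end{equation*}
and one checks that $\gamma_r>0$ is equivalent to $\sigma_*^2<(1+k_p/\gamma_p)\mu_*$ (dividing by $\sigma_*^2-\mu_*>0$, which is legitimate precisely because $\mu_*<\sigma_*^2$), whereupon $k_r>0$ follows automatically from positivity of $\mu_*,\gamma_p,\gamma_r,k_p$. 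Substituting these values of $k_r,\gamma_r$ back into the formulas for $x_2^*$ and $x_5^*$ recovers $\mu_*$ and $\sigma_*^2$ by construction, so every point of $\mathcal{A}$ is attained.

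Finally, openness of $\mathcal{A}$ is immediate since it is cut out of $\mathbb{R}_{>0}^2$ by two strict inequalities between continuous functions, and nonemptiness follows by taking, for instance, $\mu_*=1$ together with any $\sigma_*^2$ in the nonempty interval $\left(1,\,1+k_p/\gamma_p\right)$. There is essentially no serious obstacle in this argument; the one point deserving care is to verify that the parametrization $(k_r,\gamma_r)\mapsto(\mu_*,\sigma_*^2)$ is \emph{onto} $\mathcal{A}$ rather than merely \emph{into} it, which the explicit inversion above settles. It is also worth remarking that the strict lower bound $\mu_*<\sigma_*^2$ — that the protein count is necessarily over-dispersed — is forced by $k_p,\gamma_r>0$ alone, independently of the controller.
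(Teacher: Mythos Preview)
Your argument is correct and follows essentially the same route as the paper: the lower bound $\mu_*<\sigma_*^2$ comes from the coefficient-of-variation identity $\sigma_*^2/\mu_*=1+k_p/(\gamma_p+\gamma_r)$, and the upper bound from requiring the inverted control inputs $u_2^*\equiv\gamma_r=k_p\mu_*/(\sigma_*^2-\mu_*)-\gamma_p$ and $u_1^*\equiv k_r=\gamma_p\gamma_r\mu_*/k_p$ to be positive. Your write-up is in fact slightly more complete, since you make the sufficiency direction (surjectivity onto $\mathcal{A}$), openness, and nonemptiness explicit, whereas the paper leaves these implicit.
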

\begin{proof}
  The lower bound is imposed by the coefficient of variation which gives
  \begin{equation}
  \sigma_*^2=\left(1+\dfrac{k_p}{\gamma_r+\gamma_p}\right)\mu_*>\mu_*.
\end{equation}
The upper bound is imposed by the positivity of the unique equilibrium control inputs values given by
\begin{equation}
  \begin{array}{lcl}
    u_1^*&=&\dfrac{\gamma_p}{k_p}\mu_*u_2^*,\\
    u_2^*&=&-\gamma_p+\dfrac{k_p\mu_*}{\sigma^2_*-\mu_*}
  \end{array}
\end{equation}
which are well-posed since $\sigma^2_*-\mu_*>0$ according to the coefficient of variation constraint. The second equilibrium control input value $u_2^*$ is positive if and only if ${\sigma_*^2< \left(1+\dfrac{k_p}{\gamma_p}\right)\mu_*}$, which in turn implies that $u_1^*$ is nonnegative as well. The proof is complete.
\end{proof}
The lower bound obtained above remains valid when $k_p$ or $\gamma_p$ are chosen as second control inputs. The factor of the upper-bound however changes to $1+k_p/\gamma_r$ when $\gamma_p\equiv u_2$, or becomes unconstrained when $k_p\equiv u_2$. Note however that the upper bound on the variance is not a strong limitation in itself because we are mostly interested in achieving low variance.

Note also that since the lower bound on the achievable variance is independent of the controller structure, it is hence pointless to look for advanced control techniques in view of improving this limit. A positive fact, however, is that the lower bound is fixed and does not depend on the knowledge of the parameters of the system. This potentially makes low equilibrium variance robustly achievable.

\subsection{Problem formulation}

Considering the control inputs $k_r\equiv u_1$ and $\gamma_r\equiv u_2$, the system (\ref{eq:mainsyst}) can be rewritten as the bilinear system
\begin{equation}\label{eq:bilsyst}
  \begin{array}{lcl}
    \dot{x}_1&=&-u_2x_1+u_1\\
    \dot{x}_2&=&k_px_1-\gamma_px_2\\
    \dot{x}_3&=&u_2x_1-2u_2x_3+u_1\\
    \dot{x}_4&=&k_px_3-\gamma_px_4-u_2x_4\\
    \dot{x}_5&=&k_px_1+\gamma_px_2+2k_px_4-2\gamma_px_5\\
    \dot{I}_1&=&\mu_*-x_2\\
    \dot{I}_2&=&\sigma^2_*-x_5\\
  \end{array}
\end{equation}
where $I_1$ and $I_2$ are the states of the integrators. The control inputs are defined as the outputs of a multivariable positive PI controller
\begin{equation}\label{eq:clb}
  \begin{array}{lcl}
    u_1&=&\varphi\left(k_1e_1+k_2I_1+k_3e_2+k_4I_2\right)\\
    u_2&=&\varphi\left(k_5e_1+k_6I_1+k_7e_2+k_8I_2\right)
  \end{array}
\end{equation}
where $e_1:=\mu_*-x_2$ and $e_2:=\sigma_*^2-x_5$.

\begin{property}
Assume that $k_2k_8-k_4k_6\ne0$, then the equilibrium point of the system (\ref{eq:bilsyst})-(\ref{eq:clb}) is unique and given by
\begin{equation}\label{eq:eqptv1}
  \begin{array}{l}
    x_1^*=\dfrac{\gamma_p}{k_p}\mu_*,\quad x_2^*=\mu_*,\quad  x_3^*=x_1^*,\quad x_4^*=\dfrac{\gamma_p}{\gamma_p+u_2^*}\mu_*,\vspace{3mm}\\
    x_5^*=\sigma_*^2,\quad u_1^*=\dfrac{\gamma_p}{k_p}\mu_*u_2^*,\quad  u_2^*=-\gamma_p+\dfrac{k_p\mu_*}{\sigma^2_*-\mu_*}
  \end{array}
\end{equation}
and
\begin{equation}\label{eq:eqptv2}
  \left[\begin{array}{c}
    I_1^*\\
    I_2^*
  \end{array}\right]=\left[\begin{array}{cc}
    k_2 & k_4\\
    k_6 & k_8
  \end{array}\right]^{-1}\left[\begin{array}{c}
    u_1^*\\
    u_2^*
  \end{array}\right].
\end{equation}
\end{property}

\vspace{2mm}Associated with the set of admissible references $\mathcal{A}$, we define the set of equilibrium points as
\begin{equation}
  \mathcal{X}^*:=\left\{(x^*,I^*)\in\mathbb{R}^7:\ (y_*,\sigma_*^2)\in\mathcal{A}\right\}.
\end{equation}

\subsection{Local stabilizability and stabilization}

Since the equilibrium control inputs are positive, the nonlinearities are not active in a neighborhood of the equilibrium point (\ref{eq:eqptv1})-(\ref{eq:eqptv2}). Local analysis can hence be performed using standard linearization techniques. The corresponding  Jacobian system is given by
\begin{equation}
  \dot{x}_\ell=A^*_\ell x_\ell
\end{equation}
where $A^*_\ell$ is given in (\ref{eq:As}) with $\delta:=\mu_*-\sigma_*^2$.

The following result states conditions for the Jacobian system to be locally representative of the behavior of the original nonlinear system:
\begin{lemma}
 The Jacobian system fully characterizes the local behavior of the controlled nonlinear system (\ref{eq:bilsyst})-(\ref{eq:clb}) if and only if the condition $k_2k_8-k_4k_6\ne0$ holds.\mendth
\end{lemma}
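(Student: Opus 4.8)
The plan is to read ``the Jacobian system $\dot x_\ell=A_\ell^*x_\ell$ characterizes the local behavior'' as the conjunction of two facts: the saturations $\varphi$ are inactive near the equilibrium, so the closed loop is locally a smooth field with Jacobian $A_\ell^*$; and $A_\ell^*$ is attached to a genuine, isolated equilibrium, i.e.\ $A_\ell^*$ is nonsingular. The first holds automatically for admissible references, while the second turns out to be equivalent to $k_2k_8-k_4k_6\ne0$.

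First I would dispose of the nonlinearity: since $(\mu_*,\sigma_*^2)\in\mathcal A$, the equilibrium control inputs $u_1^*,u_2^*$ in (\ref{eq:eqptv1}) are strictly positive, hence both arguments of $\varphi$ in (\ref{eq:clb}) are strictly positive at the equilibrium and, by continuity, on a full neighborhood of it; there the closed loop (\ref{eq:bilsyst})--(\ref{eq:clb}) is a polynomial (bilinear) vector field with Jacobian $A_\ell^*$ at the equilibrium. Next I would compute $\det A_\ell^*$: because the integrator states enter the linearized dynamics only through $K_I:=\left[\begin{smallmatrix}k_2 & k_4\\ k_6 & k_8\end{smallmatrix}\right]$ (as $e_1=e_2=0$ at equilibrium) while the $\dot I$-rows do not involve $(I_1,I_2)$, one has $A_\ell^*=\left[\begin{smallmatrix}A_{xx} & BK_I\\ -C & 0\end{smallmatrix}\right]$ with $C$ selecting $(x_2,x_5)$ and $B$ the plant input matrix, and a Schur-complement evaluation gives $\det A_\ell^*=\kappa\,\det K_I=\kappa\,(k_2k_8-k_4k_6)$, with $\kappa$ a constant determined by the plant alone (concretely $\kappa=\pm\det(\tilde A)\,\det(P(0))$, where $\tilde A$ is the Hurwitz open-loop Jacobian and $P(0)$ the plant static gain from $(u_1,u_2)$ to $(x_2,x_5)$). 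A short direct check on (\ref{eq:bilsyst}) shows $P(0)$ is invertible --- it factors through $x_4^*=\gamma_p\mu_*/(\gamma_p+u_2^*)>0$ and $u_2^*>0$, both guaranteed on $\mathcal A$ --- and $\det\tilde A\ne0$ since $\tilde A$ is Hurwitz, so $\kappa\ne0$; hence $A_\ell^*$ is nonsingular if and only if $k_2k_8-k_4k_6\ne0$.

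It then remains to assemble the two implications. If $k_2k_8-k_4k_6\ne0$: by the preceding Property the closed loop has the unique equilibrium (\ref{eq:eqptv1})--(\ref{eq:eqptv2}), $\varphi$ is inactive around it, and $A_\ell^*$ is nonsingular, so the equilibrium is isolated and the nonlinear system agrees with $\dot x_\ell=A_\ell^*x_\ell$ to first order (with topological conjugacy by Hartman--Grobman whenever $A_\ell^*$ is hyperbolic); the Jacobian system is thus locally representative. Conversely, if $k_2k_8-k_4k_6=0$ then $K_I$ is singular: at any equilibrium (\ref{eq:clb}) forces $u^*=K_I I^*$ while $(u_1^*,u_2^*)$ is fixed by the reference alone, so either $(u_1^*,u_2^*)\notin\operatorname{range}K_I$, in which case no equilibrium with the prescribed integral action exists, or (\ref{eq:eqptv2}) describes a whole line $I^*+\ker K_I$ of equilibria; in the latter case $A_\ell^*\left[\begin{smallmatrix}0\\ v\end{smallmatrix}\right]=0$ for every $v\in\ker K_I$, the equilibrium is not isolated, and the dynamics along the resulting center direction are driven by the genuine bilinear terms of (\ref{eq:bilsyst})--(\ref{eq:clb}), which $A_\ell^*$ cannot see. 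In every case the Jacobian fails to characterize the local behavior, which gives the equivalence.

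I expect the main obstacle to be the determinant step --- putting $A_\ell^*$ into the stated block form and, above all, verifying that the plant factor $\det P(0)$ does not vanish on $\mathcal A$, since this is precisely what upgrades ``$k_2k_8-k_4k_6=0\Rightarrow A_\ell^*$ singular'' to the full equivalence. The reduction to the linear regime and the converse are then bookkeeping resting on the preceding Property and the positivity of $u_1^*,u_2^*$.
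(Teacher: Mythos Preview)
Your proof is correct and rests on the same criterion as the paper: the Jacobian is locally representative precisely when $A_\ell^*$ is nonsingular, and this reduces to $k_2k_8-k_4k_6\ne0$. The paper simply records the closed-form value
\[
\det(A_\ell^*)=4\gamma_pk_p\,(k_2k_8-k_4k_6)\bigl(\mu_*(k_p+\gamma_p)-\gamma_p\sigma_*^2\bigr)
\]
and observes that the last factor is nonzero on $\mathcal A$. You instead reach the factorisation structurally, via the block form $A_\ell^*=\left[\begin{smallmatrix}A_{xx}&BK_I\\-C&0\end{smallmatrix}\right]$ and a Schur complement. The step you invoke implicitly --- that the proportional gains in $A_{xx}=\tilde A-BK_PC$ can be eliminated by the row operation $\left[\begin{smallmatrix}I&-BK_P\\0&I\end{smallmatrix}\right]$, leaving $\det A_\ell^*=\det(\tilde A)\det(C\tilde A^{-1}B)\det(K_I)$ --- is what makes $\kappa$ depend on the plant alone and explains \emph{why} only the integral-gain determinant survives; this is worth making explicit, since a direct Schur complement on $A_{xx}$ would at first sight drag in $k_1,k_3,k_5,k_7$. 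Your argument that $\kappa\ne0$ via $u_2^*>0$ on $\mathcal A$ is exactly the paper's observation that $\mu_*(k_p+\gamma_p)-\gamma_p\sigma_*^2>0$ on $\mathcal A$, since the two conditions are equivalent. Finally, you spell out the inactivity of $\varphi$ near the equilibrium and the converse (a line of equilibria and a nontrivial center direction when $K_I$ is singular), which the paper leaves to the surrounding text; this makes your write-up longer but more self-contained, while the paper's brute-force determinant is shorter but less explanatory.
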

\begin{proof}
  For the Jacobian system to represent the local behavior, it is necessary and sufficient that $A^*_\ell$ has no eigenvalue at 0. A quick check at the determinant value
  \begin{equation*}
    \det(A^*_\ell)=4\gamma_pk_p(k_2k_8-k_4k_6)(\mu_*(k_p+\gamma_p)-\gamma_p\sigma_*^2)
  \end{equation*}
  yields that the condition $k_2k_8-k_4k_6\ne0$ is necessary and sufficient for the local representativity of the nonlinear system. Note that since $(\mu_*,\sigma_*^2)\in\mathcal{A}$, the term ${\mu_*(k_p+\gamma_p)-\gamma_p\sigma_*^2}$ is always different from 0. The proof is complete.
\end{proof}
\begin{figure*}
  \begin{equation}\label{eq:As}
A^*_\ell=\begin{bmatrix}
    \gamma_p+\frac{k_p\mu_*}{\delta} & -k_1+\frac{\gamma_pk_5\mu_*}{k_p} & 0 & 0 & -k_3+\frac{\gamma_pk_7\mu_*}{k_p} & k_2-\frac{\gamma_pk_6\mu_*}{k_p} & k_4-\frac{\gamma_pk_8\mu_*}{k_p}\\
    k_p & -\gamma_p & 0 & 0 & 0 & 0 & 0\\
    -\gamma_p-\frac{k_p\mu_*}{\delta} &  -k_1+\frac{\gamma_pk_5\mu_*}{k_p} & 2\gamma_p+2\frac{k_p\mu_*}{\delta} & 0 & -k_3+\frac{\gamma_pk_7\mu_*}{k_p} & k_2-\frac{\gamma_pk_6\mu_*}{k_p} & k_4-\frac{\gamma_pk_8\mu_*}{k_p}\\
    0 & -k_5\frac{\gamma_p\delta}{k_p} & k_p & \frac{k_p\mu_*}{\delta} & -\frac{\gamma_pk_7\delta}{k_p} & \frac{\gamma_pk_6\delta}{k_p} & \frac{\gamma_pk_8\delta}{k_p}\\
    k_p & \gamma_p & 0 & 2k_p & -2\gamma_p & 0 & 0\\
    0 & -1 & 0 & 0 & 0& 0 & 0\\
    0 & 0 & 0 & 0 & -1& 0 & 0
  \end{bmatrix}
\end{equation}
 \hrulefill
\end{figure*}

The local system being linear, the Routh-Hurwitz criterion could have indeed be applied as in the mean control case, but would have led to very complex algebraic inequalities, difficult to analyze in the general case, even for simple controller structures. The Popov-Belevitch-Hautus (PBH) stabilizability test would not have helped either to conclude on anything useful since it does not take into account the controller structure. Despite the `large size' of the matrix $A^*_\ell$, it is fortunately still possible to provide a stabilizability result using the fact that $A^*_\ell$ is marginally stable\footnote{$A_\ell^*$ has eigenvalues in the closed-left half plane and those on the imaginary axis are semisimple \cite{Meyer:00}.} when the control parameters $k_i$ are set to 0. This is obtained using perturbation theory of nonsymmetric matrices \cite{Seyranian:03}.
\begin{lemma}
  Given any $k_p,\gamma_p>0$, the bilinear system (\ref{eq:bilsyst}) is locally asymptotically stabilizable around any equilibrium point (\ref{eq:eqptv1})-(\ref{eq:eqptv2}) using the control law (\ref{eq:clb}).\mendth
\end{lemma}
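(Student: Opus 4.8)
The plan is to exhibit a single choice of gains $(k_1,\dots,k_8)$ for which the Jacobian matrix $A_\ell^*$ of (\ref{eq:As}) is Hurwitz, obtained by perturbing off the gain-free matrix $A_0:=A_\ell^*|_{k=0}$. First I would record the structure of $A_0$: the columns multiplying $I_1$ and $I_2$ (columns $6$ and $7$) vanish, so $A_0$ is block lower-triangular with a $5\times 5$ \emph{lower-triangular} block $M$ acting on $(x_1,\dots,x_5)$, a $2\times 5$ lower-left block $N$, and two zero trailing columns; the diagonal of $M$ is $(-u_2^*,-\gamma_p,-2u_2^*,-(u_2^*+\gamma_p),-2\gamma_p)$, all strictly negative because $u_2^*,\gamma_p>0$ on $\mathcal{A}$. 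Hence the spectrum of $A_0$ consists of the five eigenvalues of $M$, all in the open left half-plane, together with the eigenvalue $0$, whose geometric and algebraic multiplicities both equal $2$ (the former because $\rank A_0=5$, the latter from the factored characteristic polynomial $\lambda^2\det(\lambda I-M)$). In particular $0$ is semisimple — exactly the footnote's assertion — which is what makes first-order perturbation theory applicable.

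Since every entry of $A_\ell^*$ is affine in the gains, writing $k=\epsilon v$ yields $A_\ell^*(\epsilon v)=A_0+\epsilon A_1$ with $A_1=\sum_i v_i\,\partial_{k_i}A_\ell^*$. By the perturbation theory of nonsymmetric matrices \cite{Seyranian:03}, the two eigenvalues bifurcating from $0$ satisfy $\lambda_j(\epsilon)=\epsilon\nu_j+O(\epsilon^2)$, where the $\nu_j$ are the eigenvalues of the reduced matrix $\Lambda(v):=L^\top A_1R$; here $R\in\mathbb{R}^{7\times2}$ has the $6$th and $7$th unit vectors as columns, and $L^\top=[\,-NM^{-1}\ \ I_2\,]$, so that $L^\top R=I_2$ and the rows of $L^\top$, columns of $R$ span the left, right null spaces of $A_0$. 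Meanwhile the remaining five eigenvalues, being continuous perturbations of the strictly stable eigenvalues of $M$, stay in the open left half-plane for $\epsilon$ small. Because $R$ selects columns $6$ and $7$ of $A_1$, only the integral gains $k_2,k_4,k_6,k_8$ enter $\Lambda$ (the gains $k_1,k_3,k_5,k_7$ act on columns $2$ and $5$ only). I would then set $k_1=k_3=k_4=k_6=k_7=0$, keeping $k_2,k_8$ free; a short computation using the triangular inverse of $M$ gives
\begin{equation*}
\det\Lambda=k_2k_8\,\frac{(\mu_*-\sigma_*^2)\,k_p}{u_2^*\,\gamma_p\,(u_2^*+\gamma_p)},\qquad \operatorname{tr}\Lambda=-k_2\,\frac{k_p}{u_2^*\,\gamma_p}+k_8\,\theta,
\end{equation*}
with $\theta$ a constant depending only on the plant and the equilibrium.

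Finally, since $\mu_*-\sigma_*^2<0$ on $\mathcal{A}$ and $k_p,\gamma_p,u_2^*>0$, the coefficient of $k_2k_8$ in $\det\Lambda$ is a fixed negative number, so choosing $k_2>0$, $k_8<0$ forces $\det\Lambda>0$; and because the coefficient of $k_2$ in $\operatorname{tr}\Lambda$ is strictly negative, taking $k_2$ large enough (with $k_8$ fixed) forces $\operatorname{tr}\Lambda<0$ while keeping $\det\Lambda>0$. Hence $\Lambda$ is Hurwitz, so $A_\ell^*(\epsilon v)$ is Hurwitz for all sufficiently small $\epsilon>0$ (and $k_2k_8-k_4k_6=k_2k_8\neq0$, so the Jacobian is locally representative by the previous lemma), which is precisely local asymptotic stabilizability around the chosen equilibrium; the argument applies verbatim for every $(\mu_*,\sigma_*^2)\in\mathcal{A}$. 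The step I expect to require the most care — and the one that could in principle derail the whole approach — is showing $\Lambda(v)$ can be rendered Hurwitz by some admissible $v$, since a priori it might be structurally constrained to, say, have sign-definite trace or determinant; this comes down to the non-vanishing of the $(2,1)$ and $(5,4)$ entries of $M^{-1}$, which is immediate from the triangular form of $M$ (its relevant subdiagonal entries being $k_p$ and $2k_p$), so in the end I anticipate only bookkeeping of the $M^{-1}$ entries rather than a genuine obstacle.
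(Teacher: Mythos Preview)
Your proposal is correct and follows essentially the same route as the paper's proof: both linearize, observe that $A_0=A_\ell^*|_{k=0}$ is marginally stable with a semisimple zero eigenvalue of multiplicity two, invoke first-order perturbation theory \cite{Seyranian:03} to reduce stabilizability to rendering a $2\times 2$ matrix Hurwitz, note that only the integral gains enter this reduced matrix, restrict to the decoupled choice $k_4=k_6=0$, and then show that $k_2>0$, $k_8<0$ (with $k_2$ large enough relative to $k_8$) achieves $\det>0$ and $\operatorname{tr}<0$. The only cosmetic differences are that the paper writes the reduced matrix $M(d)$ explicitly with the factor $\psi=(\mu_*-\sigma_*^2)/(\gamma_p(\mu_*-\sigma_*^2)+k_p\mu_*)=-1/u_2^*$ while you record only its determinant and trace, and that you give the concrete form $L^\top=[\,-NM^{-1}\ \ I_2\,]$ of the left null-space basis where the paper leaves the eigenvectors abstract; your determinant expression agrees with the paper's after the substitution $u_2^*+\gamma_p=k_p\mu_*/(\sigma_*^2-\mu_*)$.
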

\begin{proof}
The perturbation argument relies on checking whether the eigenvalues on the imaginary axis can be shifted by slightly perturbing the controller coefficients around the `0-controller', i.e. by letting $k_i=\eps d_i$, where $\eps\ge0$ is the small perturbation parameter and $d_i$ is the perturbation direction for controller parameter $k_i$. We assume here that both integrators are involved in the controller, that is $|d_2|+|d_6|>0$ and $|d_4|+|d_8|>0$. To prove the result, let us first rewrite the matrix $A^*_\ell$ as
\begin{equation}
  A^*_\ell=A_0+\eps\sum_{j=1}^8d_jA_j.
\end{equation}
The matrix $A_0$ is a marginally stable matrix with a semisimple eigenvalue of multiplicity two at zero. Paradoxically, these eigenvalues introduced by the PI controller are the only critical ones that must be stabilized, i.e. shifted to the open left-half plane. From perturbation theory of general matrices \cite{Seyranian:03}, it is known that semisimple eigenvalues bifurcate into (distinct or not) eigenvalues according to the expression \cite{Seyranian:03}
\begin{equation}\label{eq:eigexp}
  \lambda_i(\eps,d)=\eps\xi_i(d)+o(\eps),\ i=1,2
\end{equation}
where $\xi_i(d)$ is the $i^{th}$ eigenvalue of the matrix ${M(d):=\sum_{i=1}^8d_iM_i}$ with
\begin{equation}
M_i:=\begin{bmatrix}
    \nu_\ell^1\\
    \nu_\ell^2
  \end{bmatrix}A_j\begin{bmatrix}
   \nu_r^1&& \nu_r^1
  \end{bmatrix},\ i=1,\ldots,8.
\end{equation}
Above, $\nu_\ell^1,\nu_\ell^2$ and $\nu_r^1,\nu_r^2$ are the normalized\footnote{Normalized eigenvectors verify the conditions $\nu_\ell^1\nu_r^1=\nu_\ell^2\nu_r^2=1$ and ${\nu_\ell^1\nu_r^2=\nu_\ell^2\nu_r^1=0}$.} left- and right-eigenvectors associated with the semisimple zero eigenvalue. It turns out that all $M_i$'s with odd index are zero, indicating that the proportional gains have a locally negligible stabilizing effect. This hence reduces the size of the problem to 4 parameters, i.e. those related to integral terms. We make now the additional restriction that $d_4=d_6=0$ reducing the controller structure to one integrator per control channel. The matrix $M(d)$ then becomes
\begin{equation*}
  M(d)=\psi\begin{bmatrix}
        \dfrac{k_pd_2}{\gamma_p} & -d_8\mu_*\\
        \dfrac{k_p\sigma_*^2d_2}{\gamma_p\mu_*} & \dfrac{\gamma_p(\mu_*-\sigma_*^2)^2}{k_p\mu_*}+\mu_*-2\sigma_*^2
  \end{bmatrix}
\end{equation*}
where $\psi:=\dfrac{\mu_*-\sigma_*^2}{\gamma_p(\mu_*-\sigma_*^2)+k_p\mu_*}$. The semisimple eigenvalues then move to the open left-half plane if there exist perturbation directions $d_2,d_8\in\mathbb{R}$, $d_2d_8\ne0$, such that $M(d)$ is Hurwitz. We can now invoke the Routh-Hurwitz criterion on $M(d)$ and we get the conditions
\begin{equation*}
  \begin{array}{rcl}
    d_2d_8\psi\dfrac{(\mu_*-\sigma_*^2)^2}{\gamma_p\mu_*}&>&0\\
    \gamma_p\psi\left(d_8\left(\gamma_p(\mu_*-\sigma_*^2)^2-2k_p\mu_*\sigma_*^2\right)+d_2k_p\mu_*\sigma_*^2\right)&<&0.
  \end{array}
\end{equation*}
Since the term $\psi$ is negative for all $(\mu_*,\sigma_*^2)\in\mathcal{A}$, the first inequality holds true if and only if $d_2d_8<0$, i.e. perturbation directions have different signs. The second inequality can be rewritten as
\begin{equation}\label{eq:lastcond}
  d_2>d_8\left(2-\dfrac{\gamma_p(\mu_*-\sigma_*^2)^2}{k_p\mu_*\sigma_*^2}\right).
\end{equation}
Choosing then $d_8<0$, there always exists $d_2>0$ such that the above inequality is satisfied, making thus the matrix $M(d)$ Hurwitz. We have hence proved that for any given pair $(\mu_*,\sigma_*^2)\in\mathcal{A}$, there exists a control law (\ref{eq:clb}) that makes the corresponding equilibrium locally asymptotically stable. The proof is complete.
\end{proof}

It is possible to go beyond this result and show that there exist semi-global PI controllers:
\begin{lemma}
 Given any $k_p,\gamma_p>0$, there exists a common control law (\ref{eq:clb}) that simultaneously locally asymptotically stabilizes system  (\ref{eq:bilsyst}) around all the equilibrium points in $\mathcal{X}^*$.\mendth
\end{lemma}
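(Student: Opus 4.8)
The plan is to sharpen the perturbation argument of the previous lemma by showing that the stabilizing perturbation directions it produces can be chosen \emph{independently} of the equilibrium point, and then to promote the resulting one‑parameter family of gains to a single common controller. Recall that, after the reductions made there (odd‑indexed $d_i$ locally ineffective, $d_4=d_6=0$) and the first‑order expansion (\ref{eq:eigexp}), local asymptotic stabilization of the equilibrium attached to a reference $(\mu_*,\sigma_*^2)\in\mathcal{A}$ is secured as soon as the $2\times2$ matrix $M(d)$ is Hurwitz, i.e. as soon as $d_2d_8<0$ and the inequality (\ref{eq:lastcond}) hold. Writing $g(\mu_*,\sigma_*^2):=2-\gamma_p(\mu_*-\sigma_*^2)^2/(k_p\mu_*\sigma_*^2)$ for the coefficient in (\ref{eq:lastcond}), the whole question reduces to whether one pair $(d_2,d_8)$ with $d_2d_8<0$ can satisfy $d_2>d_8\,g(\mu_*,\sigma_*^2)$ for \emph{every} $(\mu_*,\sigma_*^2)\in\mathcal{A}$.

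The concrete step is to show that $g$ is uniformly bounded over $\mathcal{A}$, and in fact positive. I would parametrize $\mathcal{A}$ by the ratio $t:=\sigma_*^2/\mu_*$, which by the very definition of $\mathcal{A}$ ranges in the bounded interval $(1,\,1+k_p/\gamma_p)$, and observe that
\begin{equation*}
  g \;=\; 2-\dfrac{\gamma_p}{k_p}\,h(t),\qquad h(t):=\dfrac{(t-1)^2}{t}=t-2+\dfrac{1}{t},
\end{equation*}
so $g$ depends on the reference only through $t$. Since $h'(t)=1-t^{-2}>0$ on $(1,\infty)$, $h$ is increasing with $h(1^+)=0$ and $h(1+k_p/\gamma_p)=k_p^2/(\gamma_p(\gamma_p+k_p))$, whence $g\in\big(\,(2\gamma_p+k_p)/(\gamma_p+k_p),\,2\,\big)\subset(1,2)$ for all $(\mu_*,\sigma_*^2)\in\mathcal{A}$; in particular $0<g<2$. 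Therefore the single choice $d_2=1$, $d_8=-1$ gives $d_2d_8=-1<0$ and $d_8g=-g<0<1=d_2$ simultaneously for every admissible reference, i.e. $M(d)$ is Hurwitz at \emph{every} equilibrium of $\mathcal{X}^*$ with this equilibrium‑independent direction.

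It remains to turn this into a common controller. The clean observation is that, as one reads off from (\ref{eq:As}), $A^*_\ell$ depends on the reference only through $t$ and through $\mu_*$‑weighted multiples of the controller gains; rescaling the gains by $\mu_*^{-1}$ (equivalently, keeping in (\ref{eq:clb}) the direction $d=(d_2,d_8)=(1,-1)$ with magnitude of order $\eps/\mu_*$) reduces $A^*_\ell$ to a matrix $F(t)+\eps\,G(t)d$ depending only on $t\in(1,1+k_p/\gamma_p)$ and on the single parameter $\eps>0$. On any closed subinterval of $(1,1+k_p/\gamma_p)$ the spectral abscissa of $F(t)+\eps G(t)d$ is jointly continuous in $(t,\eps)$ and, by the estimate on $g$ above together with (\ref{eq:eigexp}), strictly negative for small $\eps$, hence negative \emph{uniformly} in $t$ for $\eps$ below a positive threshold; this already yields a common controller for every compact family of references. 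To reach the full set $\mathcal{X}^*$ one still has to handle the two degenerate endpoint regimes — $t\to1^+$, where the bifurcation rate $\psi$ vanishes, and $t\to(1+k_p/\gamma_p)^-$, where the non‑critical eigenvalue $-u_2^*$ of $F(t)$ tends to $0$ — by a dedicated leading‑order analysis, checking that the definite signs of the trace and determinant of $M(d)$ established above survive in the limit and that no non‑critical eigenvalue of $A^*_\ell$ is driven across the imaginary axis. I expect this endpoint/uniformity analysis near $\partial\mathcal{A}$ to be the main obstacle; everything upstream of it is the elementary bounded‑interval estimate on $g$ carried out above.
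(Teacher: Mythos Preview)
Your bounded-interval estimate on $g$ is exactly the paper's argument: the paper computes $\sup_{\mathcal{A}}\gamma_p(\mu_*-\sigma_*^2)^2/(k_p\mu_*\sigma_*^2)=k_p/(\gamma_p+k_p)\in(0,1)$ (your $h$-calculation in different notation) and concludes that any $d_2>0$, $d_8<0$ makes $M(d)$ Hurwitz for every $(\mu_*,\sigma_*^2)\in\mathcal{A}$. The paper then simply stops and declares this sufficient for the existence of a common controller, without addressing the passage from common perturbation \emph{directions} to common \emph{gains}.

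You are more scrupulous in flagging that gap --- the $o(\eps)$ term in (\ref{eq:eigexp}) is reference-dependent, so a single $\eps>0$ need not work across all of $\mathcal{X}^*$ --- but your proposed remedy does not close it. Rescaling the gains by $\mu_*^{-1}$ makes them reference-dependent by construction, so the resulting controller is not common. The structural claim underlying the rescaling is also inaccurate: in (\ref{eq:As}) with only $k_2,k_8$ nonzero, $k_8$ indeed enters as $k_8\mu_*$ and $k_8\delta=k_8\mu_*(1-t)$, but $k_2$ appears bare (column~6 of rows~1 and~3), so no single power-of-$\mu_*$ rescaling of both gains collapses $A_\ell^*$ to a function of $t$ alone. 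Your compactness argument on closed subintervals of $(1,1+k_p/\gamma_p)$ is sound but, as you note, does not reach $\mathcal{X}^*$; and this is not a mere technicality, since $\mathcal{A}$ is open and unbounded in $\mu_*$ --- for fixed $k_8\ne0$ the entry $\gamma_pk_8\mu_*/k_p$ in $A_\ell^*$ grows without bound as $\mu_*\to\infty$, so the first-order expansion (\ref{eq:eigexp}) cannot be uniform there. In summary: your core computation matches the paper's proof, you correctly identify a step the paper elides, but the completion you sketch does not work.
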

\begin{proof}
 To show that there exists a common controller that simultaneously makes all the equilibrium points in $\mathcal{X}^*$ locally asymptotically stable, it is enough to prove that there exists a pair $(d_2,d_8)\in\mathbb{R}^2$, $d_2d_8\ne0$ such that the inequality (\ref{eq:lastcond}) is satisfied for all $(\mu_*,\sigma_*^2)\in\mathcal{A}$. This is equivalent to finding a finite $d_2>0$ satisfying
\begin{equation}
  d_2>d_8\left(2-\sup_{(\mu_*,\sigma_*^2)\in\mathcal{A}}\left\{\dfrac{\gamma_p(\mu_*-\sigma_*^2)^2}{k_p\mu_*\sigma_*^2}\right\}\right).
\end{equation}
Standard analysis allows to prove that
\begin{equation}
  \sup_{(\mu_*,\sigma_*^2)\in\mathcal{A}}\left\{\dfrac{\gamma_p(\mu_*-\sigma_*^2)^2}{k_p\mu_*\sigma_*^2}\right\}=\dfrac{k_p}{\gamma_p+k_p}\in(0,1)
\end{equation}
which shows that by simply choosing the directions $d_8<0$ and $d_2>0$, the matrix $M(d)$ becomes then Hurwitz for all $(\mu_*,\sigma_*^2)\in\mathcal{A}$. This therefore implies the existence of a common control law (\ref{eq:clb}) that locally and asymptotically simultaneously stabilizes all the equilibrium points in $\mathcal{X}^*$. 
\end{proof}


\section{Examples}\label{sec:ex}

For simulation purposes, we consider the normalized version of system (\ref{eq:bilsyst}) similarly as in \cite{Khammash:11}:
\begin{equation*}
  \begin{array}{lcl}
    \dot{\bar{x}}_1(t)&=&-(\gamma_r^0+u_2)\bar{x}_1(t)+\tilde{u}_1(t)\\
    \dot{\bar{x}}_2(t)&=&\gamma_p(\bar{x}_1(t)-\bar{x}_2(t))\\
    \dot{\bar{x}}_3(t)&=&(\gamma_r^0+u_2(t))\bar{x}_1(t)-2(\gamma_r^0+u_2)\bar{x}_3(t)+\tilde{u}_1(t)\\
    \dot{\bar{x}}_4(t)&=&(\gamma_r^0+\gamma_p)\bar{x}_3(t)-(\gamma_r^0+u_2+\gamma_p)\bar{x}_4(t)\\
    \dot{\bar{x}}_5(t)&=&\dfrac{\gamma_p}{\alpha}\left[\bar{x}_1(t)+\bar{x}_2(t)+2(\alpha-1)\bar{x}_4(t)\right]-2\gamma_p\bar{x}_5(t)
  \end{array}
\end{equation*}
where $\tilde{u}_1(t)=\gamma_r^0+bu_1(t)$, $\gamma_r^0=0.03$, $\gamma_p=0.0066$, ${b=0.9587}$, $k_p=0.06$ and $\alpha=1+k_p/(\gamma_r^0+\gamma_p)$. The system has been normalized according to basal levels for transcription rate $k_r^0$ and degradation rate $\gamma_r^0$. In the absence of control inputs, i.e. $\tilde{u}_1\equiv0$ and $u_2\equiv0$, the system converges to the normalized equilibrium values $\bar{x}_i^*=1$, $i=1,\ldots,5$. The parameter values are borrowed from \cite{Khammash:11}.

\subsection{Mean control}

The considered PI controller parameters computed using loop shaping are $k_1=0.01$ and $k_2=0.0007$. Simulations yield the trajectories of Fig. \ref{fig:mean1} and Fig. \ref{fig:mean2}. We can see that, as expected, the proposed controller achieves output tracking for different references and in presence of constant input disturbances.

\begin{figure}[h]
  \hspace{-9mm}\includegraphics[width=0.55\textwidth]{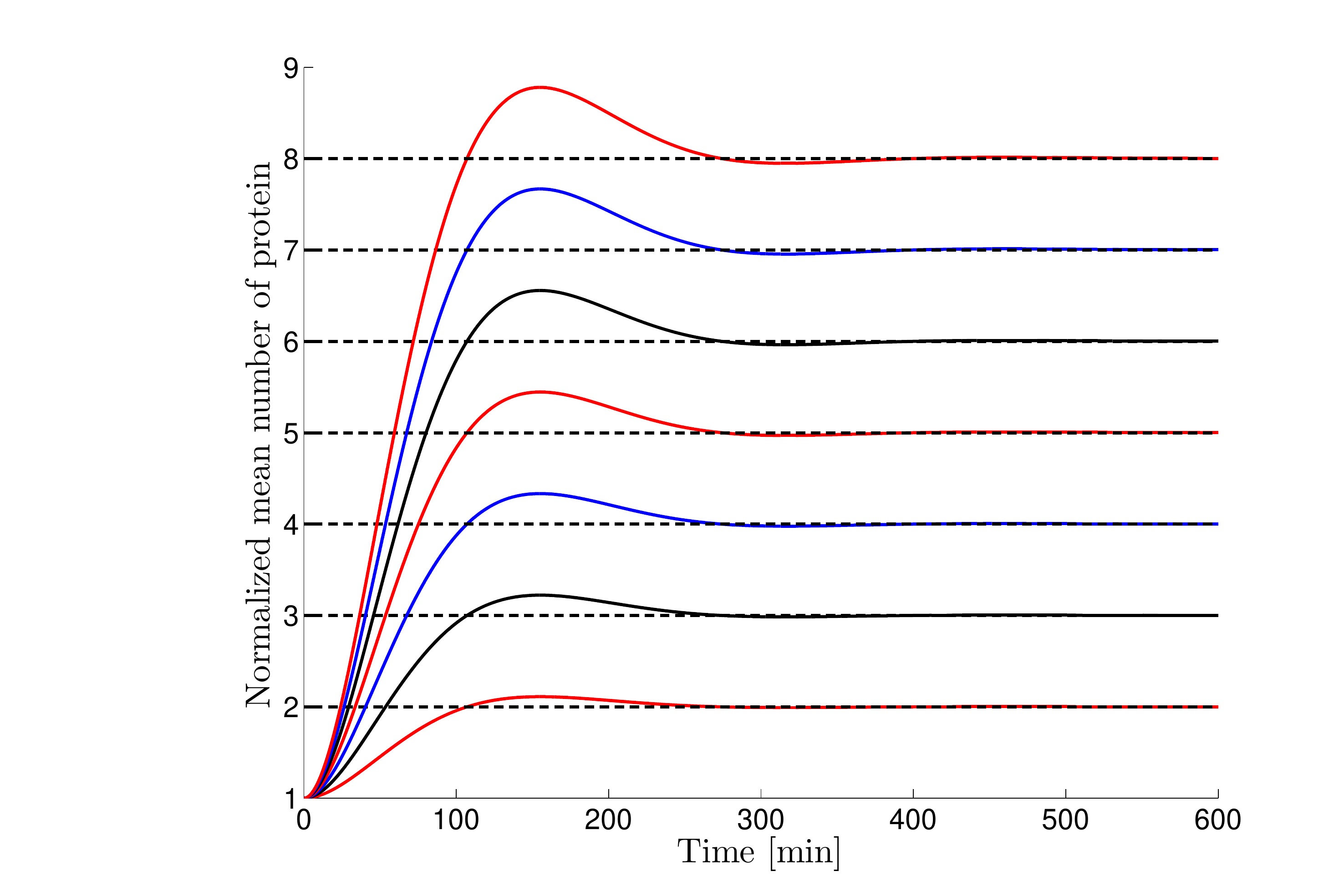}
\caption{Trajectories of the mean number of proteins for different reference values.}\label{fig:mean1}
\end{figure}

\begin{figure}[h]
  \hspace{-9mm}\includegraphics[width=0.55\textwidth]{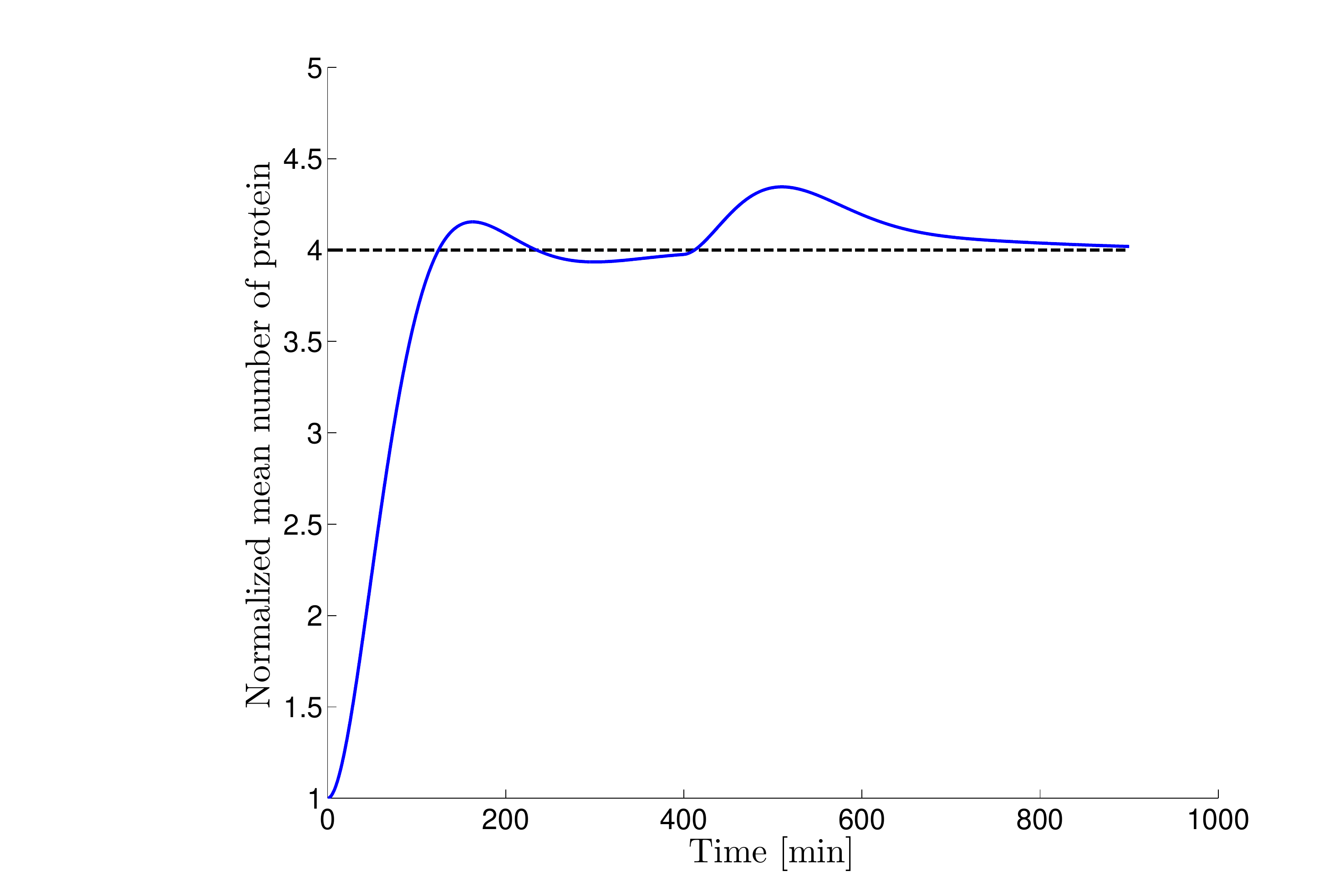}
\caption{Trajectories of the mean number of proteins in response to constant input disturbance.}\label{fig:mean2}
\end{figure}


%


\subsection{Mean and variance control}

In this case, a PI controller with gain $k_1=1$, $k_2=0.007$, $k_3=-0.2$ and $k_4=-0.0014$  is considered.  The normalized achievable minimal variance is given by
\begin{equation}
  \sigma_{min}^2=\dfrac{\gamma_r^0+\gamma_p}{\gamma_r^0+\gamma_p+k_p}\mu.
\end{equation}
The response of the controlled variance according to changes in the reference value is depicted in Fig. \ref{fig:var1} where we can see that the variance tracks the desired value quite well. In order to avoid oscillations, the changes in the reference values follow a ramp. It seems also important to point out that when the reference point changes, due to the coupling between the mean and variance, the mean value changes as well, but this is immediately corrected by the mean controller.

\begin{figure}[h]
  \hspace{-9mm}\includegraphics[width=0.55\textwidth]{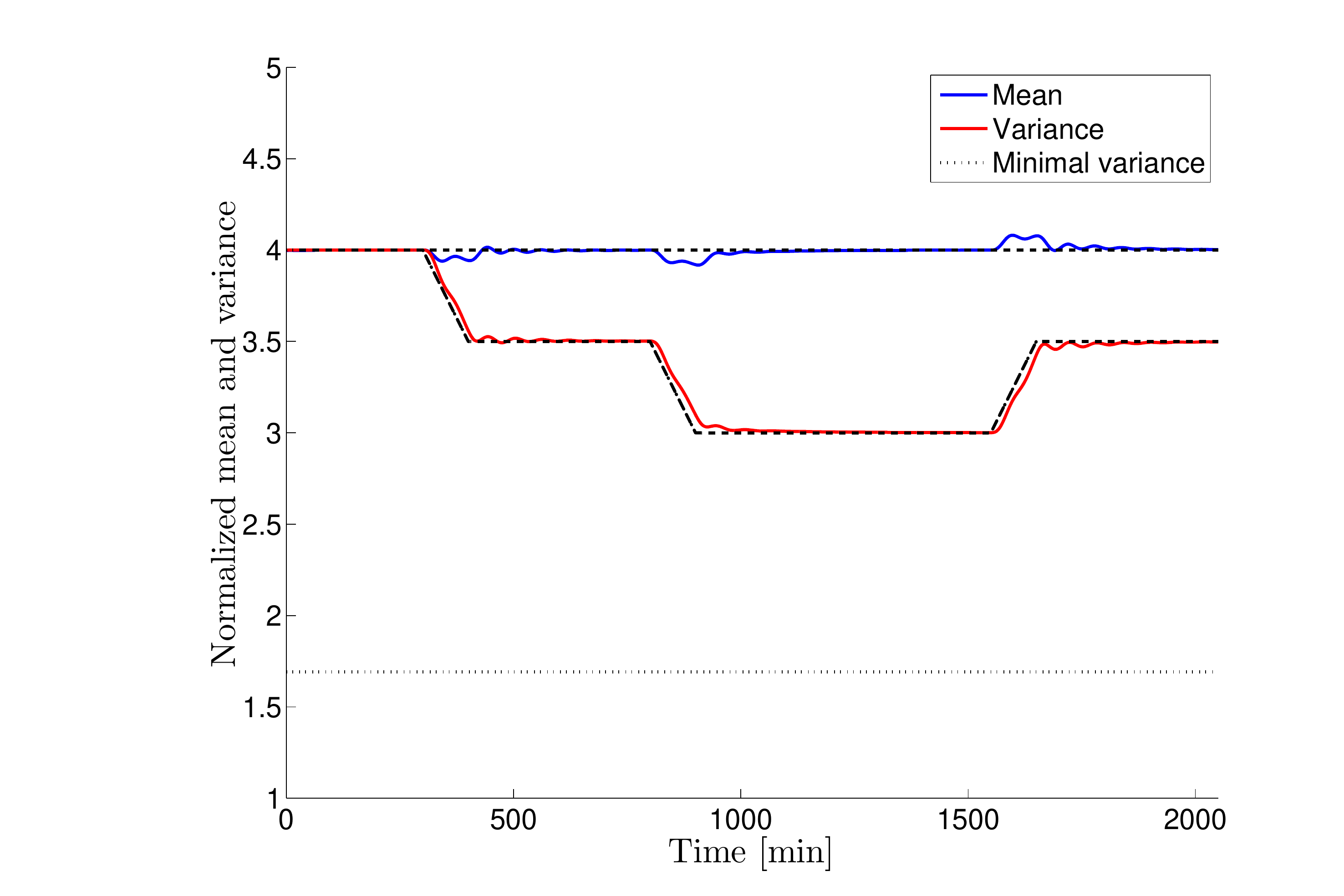}
\caption{Response of the controlled variance according to changes in the reference.}\label{fig:var1}
\end{figure}

\section{Conclusion}

Controlling the mean and variance of the number of proteins in a simple gene expression circuit using PI controllers has been shown to be achievable. Interestingly, PI controllers have been proved to be sufficient for respecting most of the inner (system) and outer (design) constraints, and to be global in the sense that a single PI can locally stabilize all the possible equilibrium points. The very same results have been extended to uncertain systems.

Future works will be devoted to a better characterization of the stability domain of the controlled variance dynamics (invariant set, global stability, etc), the derivation of advanced PI controllers to improve transient behavior and addressing implementation issues. The generalization of this idea to the non-affine propensity case and the control of higher order moments are also important problems that will be considered. Finally, the implementation on the real process is under way.

\bibliographystyle{ieeetran}

\end{document}